\lstdefinelanguage{Magma}
{
keywords={if,for,end,then,else,elif,while,function,return,cat,&,and,or },
morekeywords={HurwitzAction,Seqset,Setseq,Polytope,AutomorphismGroup,RowSequence,IdentifyGroup,
	      Subgroups,PermutationMatrix,Generators,MatrixGroup,Transpose},
sensitive=false,
morecomment=[l]{//},
morecomment=[s]{/*}{*/},
morestring=[b]",
}
\theoremstyle{plain}
\newtheorem{teor}{Theorem}[section]
\newtheorem{prop}[teor]{Proposition}
\newtheorem{lemma}[teor]{Lemma}
\newtheorem{defn}[teor]{Definition}
\newtheorem{que}[teor]{Problem}
\theoremstyle{definition}
\newtheorem{oss}[teor]{Remark}
\newtheorem{ese}[teor]{Example}
\newcommand{\Ni}{\operatorname{Ni}}
\newcommand{\id}{1}
\newcommand{\aut}{\operatorname{Aut}}
\newcommand{\cc}{{C_{2 \times 2}}}
\renewcommand{\H}{\mathcal{H}}
\renewcommand{\P}{\mathbb{P}}
\newcommand\conn[1]{\raisebox{.5pt}{\textcircled{\raisebox{-.9pt}{#1}}}}
\renewcommand{\a}[1]{
(\StrChar{#1}{1}, \StrChar{#1}{2})(\StrChar{#1}{3}, \StrChar{#1}{4})
}
\renewcommand{\b}[1]{
(\StrChar{#1}{1}, \StrChar{#1}{2})
}
\begin{document}
\title{Hurwitz spaces and liftings to the Valentiner group}
\author{Riccardo Moschetti and Gian Pietro Pirola}

\address{R. Moschetti\\Department of Mathematics and Natural Sciences, University of Stavanger, \\NO-4036 Stavanger, Norway}
	\email{riccardo.moschetti@uis.no}

\address{G. Pirola\\Dipartimento di Matematica,
              Universit\`a di Pavia\\Italy}
\email{gianpietro.pirola@unipv.it}

\thanks{}
\subjclass[2010]{14D05, 14H30, 14Q05}
\keywords{Monodromy, Hurwitz spaces, Valentiner group.}
\begin{abstract}
We study the components of the Hurwitz scheme of ramified coverings of $\P^1$ with monodromy given by the alternating group $A_6$ and elements in the conjugacy class of product of two disjoint cycles. In order to detect the connected components of the Hurwitz scheme, inspired by the case of the spin structures studied by Fried for the $3$-cycles, we use as invariant the lifting to the Valentiner group, triple covering of $A_6$. We prove that the Hurwitz scheme has two irreducible components when the genus of the covering is greater than zero, in accordance with the asymptotic solution found by Bogomolov and Kulikov.
\end{abstract}

\maketitle

\section{Introduction}
The symmetric and the alternating groups are ubiquitous in the study of the monodromy of curves. In particular, they are the unique possible examples of monodromy if we are in the case of an indecomposable cover $X \rightarrow \P^1$ with $X$ a generic complex curve of genus greater than $3$ (see for instance \cite{GS} and \cite{GM}). The proof of the existence of such a covering for the general curve and symmetric monodromy is classical and, for alternating monodromy, can be found in \cite{MV} and \cite{AP}. 
Coverings with odd ramification have been studied starting from the seminal works of Serre in \cite{SerreRelevament}, and Fried in \cite{FriedAlternating}. This is particularly interesting due to the relations with theta characteristics and modular towers studied in \cite{BF}. 
Serre proved in \cite{SerreRelevament} that the moduli spaces of absolute and inner covers of $\P^1$ of genus zero with monodromy group $A_n$ and elements in conjugacy classes of odd order cycles are connected. For a good reference about alternating groups in which one can find all the precise definition of conjugacy classes and coverings see \cite{WG}. It is proved in \cite{FriedAlternating} that the spin structure determines the irreducible components of the Hurwitz space of coverings of $\P^1$ of degree $n$ branched on $r$ points with $r \geq n \geq 5$, and monodromy given by the conjugacy class of $3$-cycles in $A_n$. In both cases, the crucial point consists on the construction of the so-called lifting invariant, described algebraically by using a lifting to the double cover of $A_n$. This strategy does not work if one considers elements with even order. The cases of alternating groups of order six and seven are exceptional because they are the only two examples of alternating groups that admit coverings of degree three and six. This makes it possible to construct lifting invariants in case of other ramification type, for instance of order two. Bogomolov and Kulikov prove in \cite{BK} that the number of the connected components of the Hurwitz scheme is asymptotically determined by the ambiguity index, defined in terms of group coverings. The cases of $A_6$ and $A_7$ turn out to be different, as expected, with respect to the other alternating groups. This makes interesting to study the irreducible components of the Hurwitz schemes for lower genus for these cases. 

In this work, we study the irreducible components of the Hurwitz scheme in the case of the conjugacy class of products of two disjoint cycles of $A_6$. The strategy consists of an induction on the number of the branch points of the covering, using the lifting invariant to distinguish the different components. Our main result is Theorem \ref{teor:finalresult}, which asserts that the Hurwitz scheme has two irreducible components when the genus is greater than zero, and only one component for genus zero.
We also study in Theorem \ref{teor:finalresultinner} the case of the inner moduli space, proving that there are three irreducible components for genus greater than zero and two for genus zero. This provides an explicit example in which the space of the Galois closures of coverings of $\P^1$ has more connected components than the Hurwitz scheme. As for the $3$-cycles, treated in \cite{FriedAlternating}, we get that the minimal bound on the genus $g$ of $X$ from which the results of \cite{BK} hold is $g > 0$, for the case of the product of two disjoint cycles of $A_6$. 

\textbf{The plan of the paper.} Some preliminaries concerning coverings, monodromy and Hurwitz spaces are carried on in Section \ref{sec:prel}. Theorem \ref{teor:finalresult}, concerning the absolute moduli space, is mainly proved in Section \ref{sec_absolute}. The two base cases of the induction, namely the case of five and six points, are carried on in Section \ref{App:MonGenZero} and Section \ref{App:MonGenOne}, respectively. The proof of Theorem \ref{teor:finalresultinner}, that is the main result in the case of inner moduli spaces, carried on in Section \ref{sec_inner}, uses the results of the previous sections. Some open problems and some ideas for further work are stated in Section \ref{sec:open}. Finally, Appendix \ref{Apx:code} contains all the MAGMA codes used in the proofs.

\section{Preliminaries} \label{sec:prel}
In this paper all the varieties will be defined over the complex numbers. Let $X \xrightarrow{f} \P^1$ be a covering of the sphere of degree $n$. From now on, $k$ will always denote the number of ramification points of such a covering. A branch point for $f$ is a $\bar{z} \in \P^1$ such that the fibre over $\bar{z}$ is composed by a number of points that is strictly less than $n$. Let $Z$ be the branch locus of $f$; the fundamental group $\pi_1(\P^1 \smallsetminus Z, z_0)$ is generated by laces $[\gamma_1], \ldots, [\gamma_r]$ around the branch points modulo the relation $\prod [\gamma_i]=\id$. Denote by $F_0$ the fibre $f^{-1}(z_0)$, $z_0 \notin Z$.
After the choice of $p_0 \in f^{-1}(z_0)$, by the unicity of the lifting, every element $[\gamma_i]$ lifts in a unique way to $\bar{\gamma}_i \in \pi_1(X \smallsetminus f^{-1}(Z), p_0)$ as a path starting in $p_0$. This gives a well defined map
\begin{align*}
m:\pi_1(\P^1 \smallsetminus Z, z_0) &\rightarrow \aut(F_0)\\
[\gamma_i] &\mapsto (m([\gamma_i]): p_j \mapsto \bar{\gamma}_i(1))
\end{align*}
The image of $m$ is called the \textbf{monodromy group} of $f$ in $p_0$. If a name of the points of the fibre $F_0$ is given, one can associate an automorphism $m([\gamma_i])$ to an element of the symmetric group in $n$ elements.
From the geometric point of view, one can consider the Hurwitz space of coverings $\{X \rightarrow \P^1\}$ with fixed ramification type. There are several equivalence relations on this space. 
In a natural way, two coverings $\{X \xrightarrow{f} \P^1\}$ and $\{Y \xrightarrow{g} \P^1\}$ are considered equivalent if there exists a biholomorphism $\psi:X \rightarrow Y$ such that $f=g \circ \psi$. It is easy to prove that this equivalence relation is the strongest that fixes the branch points, the ramification type, and, up to conjugation, the monodromy group $G$ of the cover.
This defines the \textbf{Hurwitz spaces} $\H(G,C)^{abs}$ or $\H([G],C)^{abs}$, depending if one want to fix the monodromy $G$ or just to consider it up to conjugation in $S_n$. 
Another useful equivalence relation is obtained by considering the Galois closure of such coverings. In this case one has to choose a connected component of the $n$-fold fibre product of $f$, and the \textbf{inner Hurwitz space} $\H(G,C)^{in}$ is defined by considering all such choices to be equivalent. 
An algebraic description of these spaces can be given by using Nielsen classes. 
An introduction to the theory of these classes can be found in \cite{FriedOnline} and \cite{MS} contains some background on Hurwitz spaces.
Let $G$ be a transitive subgroup of $S_n$ and consider $r$ conjugacy classes $C:=(c_i)_{i=1}^r$ of $G$. An element $\pmb{g}$ in the \textbf{Nielsen class} $\Ni(G,C)$ is given by $r$ elements $g_i$ of $G$ such that $\prod g_i = \id$, $g_i \in c_i$ and the subgroup $\langle g_i \rangle$ generated by the $g_i$ is $G$. The \textbf{Riemann's existence theorem} guarantees that given a covering $X \xrightarrow{f} \P^1$, then the $r$ elements of $S_n$ associated to the images $m([\gamma_i])$ belong to a certain Nielsen class. Conversely, given an element $\pmb{g}$ in a Nielsen class, there exists a covering $X \xrightarrow{f} \P^1$ that is associated to $\pmb{g}$. Some background on Riemann's existence theorem can be found in \cite{FEnhanced}, \cite{FTwist}, \cite{MM}, \cite{SGal} and \cite{V}. 
There are two group actions that can be defined on $\Ni(G,C)$: choosing a different name for the elements of the fibre $F_0$ would give a right action of $S_n$. In this way only the conjugacy class of $G$ in $S_n$ is fixed and we will denote the Nielsen space as $\Ni([G],C)$. If instead we want to keep the group $G$ fixed, we have to act just by elements of $N_{S_n}(G)$, the normalizer of $G$ in $S_n$. This is called the \textbf{absolute action}. 
The group $G$ also acts on $\Ni(G,C)$ by conjugation, giving the so-called \textbf{inner action}.
If $\pmb{g}:=(g_1, \ldots, g_r)$ is an element in $\Ni(G,C)$ and $s$ is the acting element in $N_{S_n}(G)$ or in $G$, the action, denoted by $\phi_s$ is
$$\phi_s(\pmb{g}) = \phi_s(g_1, \ldots, g_r) = (s^{-1} g_1 s, \ldots, s^{-1} g_r s).$$
Hurwitz space are related with Nielsen spaces via the quotient by these equivalence relations: $\Ni(G,C)/N_{S_n}(G)$ is related to $\H(G,C)^{abs}$, and $\Ni(G,C)/G$ is related to $\H(G,C)^{in}$. As specified before, it is also possible to consider only the conjugacy class of $G$ in $S_n$, obtaining the relation between $\Ni([G],C)/{S_n}$ and $\H([G],C)^{abs}$.

There is another action on these spaces, called the \textbf{Hurwitz action}. The cardinality of the spaces after the quotient by this action is equal to the number of the connected components of the Hurwitz spaces. From the geometric point of view, one can imagine exchanging two branch points by a continued movement, whereas from the algebraic point of view, this action is described by the following

\begin{defn} \label{defn:HAC}
Let $\pmb{g}:=(g_1, \ldots, g_r)$ be an element in $\Ni(A_n,C^r)$. The braid group $B_r$ on $r$ elements acts on the right on this set. By following the notation of \cite{HBC}, consider a generator $\sigma_i$ of $B_r$. The action of $\sigma_i$ on $\pmb{g}$ is given by 
$$\sigma_i(\pmb{g}):=(g_1, \ldots, g_{i+1}, g_{i+1}^{-1} g_i g_{i+1},  \ldots, g_r).$$
\end{defn}
\noindent A description of the Hurwitz action, together with example of monodromies, can be found in \cite{HM} and a point of view on the study of the connected components of the Hurwitz schemes by means of semigroups over groups is carried out in \cite{KK}. Notice that if the group $G$ is the alternating group $A_n$, then $N_{S_n}(A_n)$ is the whole $S_n$, because $A_n$ is unique in its conjugacy class. We will need the following lemma that implies immediately that the inner action on $\Ni(G,C)$ can actually be obtained by using the Hurwitz action only.

\begin{lemma}[Lemma 2.6 of \cite{FriedAlternating}] \label{lemma:friedproductone}
Let $\pmb{g}:=(g_1, \ldots, g_r)$ be an element in $Ni(G,C)$ such that there exists $j<r$ consecutive integers $\{g_i, g_i+1, \ldots, g_j\}$ with $\prod_{h=i}^j g_h =1$. If we denote by $\gamma$ an element in the subgroup generated by $\{g_i, g_i+1, \ldots, g_j\}$, then there exists an element $Q \in B_r$ such that
$$Q(\pmb{g}) = (g_1, \ldots, g_{i-1}, \gamma g_i \gamma^{-1}, \gamma g_{i+1} \gamma^{-1}, \ldots, \gamma g_j \gamma^{-1}, g_{j+1}, \ldots g_r)$$ 
\end{lemma}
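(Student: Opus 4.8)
The plan is to realise the desired simultaneous conjugation of the block $(g_i,\dots,g_j)$ entirely through braid moves supported on the positions $i,\dots,j$: since the generators $\sigma_i,\dots,\sigma_{j-1}$ act only on those coordinates, the entries $g_1,\dots,g_{i-1}$ and $g_{j+1},\dots,g_r$ will automatically be left untouched, which is exactly what the statement demands. To lighten the notation I would relabel the block as $(a_1,\dots,a_m)$, so that $a_1\cdots a_m=\id$ and $\gamma\in\langle a_1,\dots,a_m\rangle$, and write the relevant braid generators as $\sigma_1,\dots,\sigma_{m-1}$ (sitting inside $B_r$ as $\sigma_i,\dots,\sigma_{j-1}$). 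Writing $\gamma$ as a word $a_{k_1}^{\varepsilon_1}\cdots a_{k_s}^{\varepsilon_s}$ and recalling that conjugation is a homomorphism, I would reduce the statement to two tasks: first, to produce for each $k$ a braid that conjugates the whole block by its $k$-th entry; second, to check that composing these braids in the order dictated by the word accumulates to conjugation by $\gamma$ itself.

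The construction rests on two elementary computations, both of which use the hypothesis $a_1\cdots a_m=\id$. The first is that $L:=\sigma_1\sigma_2\cdots\sigma_{m-1}$ acts on a product-one block as the genuine cyclic shift $(a_1,\dots,a_m)\mapsto(a_2,\dots,a_m,a_1)$: carrying the first strand to the last slot produces the entry $(a_2\cdots a_m)^{-1}a_1(a_2\cdots a_m)$, which collapses to $a_1$ precisely because $a_2\cdots a_m=a_1^{-1}$; in particular $L^{-1}$ restricts to the cyclic shift in the opposite direction. The second is that conjugation by the first entry can be realised: applying $\sigma_1^{-1}\cdots\sigma_{m-1}^{-1}$ sends $(a_1,\dots,a_m)$ to $(a_1a_2a_1^{-1},\dots,a_1a_ma_1^{-1},a_1)$, which is the $a_1$-conjugated block shifted once to the left, so that post-composing with the pure right shift $L^{-1}$ restores the positions and yields exactly $(a_1,a_1a_2a_1^{-1},\dots,a_1a_ma_1^{-1})$. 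I denote this composite braid by $Q^{(1)}$.

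With these in hand I would set $B^{(k)}:=L^{\,k-1}\,Q^{(1)}\,L^{-(k-1)}$: the outer shifts bring the $k$-th entry to the front, $Q^{(1)}$ conjugates the (still product-one) block by it, and the inverse shifts return every entry to its original coordinate, so $B^{(k)}$ conjugates the block by $a_k$. Finally I would take $Q=B^{(k_1)\varepsilon_1}\cdots B^{(k_s)\varepsilon_s}$, applied in that temporal order. The delicate point is that $B^{(k)}$ conjugates by whatever currently occupies the $k$-th slot, not by the original $a_k$; but direct bookkeeping shows that once the partial word $a_{k_1}^{\varepsilon_1}\cdots a_{k_t}^{\varepsilon_t}$ has been applied the block equals its conjugate by that partial product $p_t$, so the next factor contributes conjugation by $p_t\,a_{k_{t+1}}^{\varepsilon_{t+1}}\,p_t^{-1}$ and the conjugating elements multiply on the right to give $p_{t+1}$, hence $\gamma$ at the end. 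Since every generator used lies in $\{\sigma_i,\dots,\sigma_{j-1}\}\subset B_r$, the braid $Q$ fixes the outside entries and produces the claimed tuple; moreover it preserves $\Ni(G,C)$, as the block remains product-one at every stage and its entries stay within their conjugacy classes.

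The main obstacle, and the point where the product-one hypothesis is indispensable, is decoupling the two effects that any braid inevitably mixes, namely permutation of the strands and conjugation of the entries. Each elementary shift or conjugation move produces both at once, and the whole argument is organised around cancelling the unwanted permutation — via the pure shifts $L^{\pm1}$, which are pure only because the block multiplies to $\id$ — while retaining the conjugation. Verifying cleanly that this cancellation survives the composition $B^{(k_1)\varepsilon_1}\cdots B^{(k_s)\varepsilon_s}$, i.e.\ the bookkeeping of the previous paragraph, is the technical heart of the proof.
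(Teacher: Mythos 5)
Your argument is correct, but note that the paper does not prove this statement at all: it is imported verbatim as Lemma~2.6 of Fried's \emph{Alternating groups and moduli space lifting invariants} and used as a black box. What you have written is essentially a reconstruction of the standard proof of that lemma. The two computations you isolate are the right ones and both check out against the paper's convention $\sigma_\ell(\ldots,g_\ell,g_{\ell+1},\ldots)=(\ldots,g_{\ell+1},g_{\ell+1}^{-1}g_\ell g_{\ell+1},\ldots)$: the word $\sigma_1\cdots\sigma_{m-1}$ sends $(a_1,\ldots,a_m)$ to $(a_2,\ldots,a_m,(a_2\cdots a_m)^{-1}a_1(a_2\cdots a_m))$, which is the pure cyclic shift exactly when $a_1\cdots a_m=\id$; and $\sigma_1^{-1}\cdots\sigma_{m-1}^{-1}$ pushes $a_1$ rightward, conjugating everything it passes, so that correcting by the inverse shift yields conjugation of the whole block by $a_1$ with no net permutation. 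Your conjugation-by-shift trick to reach $a_k$, and the telescoping bookkeeping showing that composing the braids for the letters of a word for $\gamma$ (each of which conjugates by the \emph{current} occupant $p_t a_{k_{t+1}}^{\varepsilon_{t+1}} p_t^{-1}$ rather than the original entry) accumulates to conjugation by $\gamma$ itself, are both sound; you also correctly confine all generators to $\sigma_i,\ldots,\sigma_{j-1}$ so the entries outside the block are untouched, and you correctly flag that purity of the shift is where the product-one hypothesis enters. The only gain the paper's route offers is brevity by citation; yours buys a self-contained verification, at the cost of having to be careful (as you were) that every intermediate block on which a shift is used is still product-one.
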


The triple covering of $A_6$ is a group of $1080$ elements called the \textbf{Valentiner group}. This group was discovered by Valentiner in \cite{V}, and then studied by Wiman and Gerbaldi in \cite{W} and \cite{Ger}. This covering is described by using the following exact sequence, where $V$ is the Valentiner group and $C_3$ is the cyclic group of order three.

$$0 \rightarrow C_3 \rightarrow V \rightarrow A_6 \rightarrow 0$$

There exist many explicit descriptions of the Valentiner group, together with the covering map to $A_6$, see for instance \cite{At}. In this work we will identify $A_6$ with the subgroup of $S_6$ generated by 
$$\{s_1, s_2\}=\{(1,2)(3,4),(1,2,4,5)(3,6)\}\text{.}$$
The Valentiner group is described as the subgroup of $S_{18}$ generated by
\begin{align*}
\{v_1,v_2\}=\{&(2, 6)(4, 11)(7, 9)(8, 13)(10, 14)(12, 16), \\
&(1, 2, 7, 4)(3, 8, 6, 10)(5, 9, 13, 12)(11, 15)(14, 17)(16, 18)\}\text{.}
\end{align*}
The covering map $V \xrightarrow{\pi} A_6$ is defined on the generators by
$$v_1 \mapsto s_1 \qquad v_2 \mapsto s_2.$$
Let $\cc$ denote the conjugacy class of $A_6$ given by the product of two disjoint cycles. An element $x$ in $\cc$ admits a unique lift $\hat{x}$ to $V$ of order $2$. Let $\pmb{g}:=(g_1,\ldots, g_k)$ be an element in the Nielsen class $\Ni(A_6,\cc^k)$; for all the $g_i$, let $\hat{g}_i$ be the lifting of order $2$ to the Valentiner group. Since the product of the $g_i$ is the identity, then the product of the liftings $\gamma(\pmb{g}):= \prod{\hat{g}_i}$ belongs to the preimage $\pi^{-1}(\id_{A_6})$. 


\begin{prop} \label{prop:InnerAbsolute}
Let $\pmb{g}:=(g_1,\ldots, g_k)$ be an element in $\Ni(A_6,\cc^k)$.
\begin{enumerate}
\item The Hurwitz action commutes with the absolute and inner actions.
\item The element $\gamma(\pmb{g})$ is an invariant of the Hurwitz action.
\item The element $\gamma(\pmb{g})$ is an invariant of the inner action.
\item The order of the element $\gamma(\pmb{g})$ is an invariant of the absolute action.
\end{enumerate}
The element $\gamma(\pmb{g})$ will be called the lifting invariant of $\pmb{g}$.
\end{prop}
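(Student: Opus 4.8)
The plan is to treat the four statements in order, since (2) and (3) feed into (4). For (1) I would argue by a direct computation on the braid generators $\sigma_i$ of Definition \ref{defn:HAC}. Writing $g_j' = s^{-1} g_j s$ for the conjugation action $\phi_s$ (with $s \in N_{S_n}(G)$ or $s \in G$), one checks that applying $\phi_s$ after $\sigma_i$ and applying $\sigma_i$ after $\phi_s$ both replace the pair $(g_i, g_{i+1})$ by $(s^{-1} g_{i+1} s,\; s^{-1} g_{i+1}^{-1} g_i g_{i+1} s)$ and conjugate all other entries by $s$; hence $\phi_s \circ \sigma_i = \sigma_i \circ \phi_s$ on tuples. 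Since the $\sigma_i$ generate $B_r$, the Hurwitz action commutes with both conjugation actions.

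For (2) the key observation is a lifting lemma: for any $x \in \cc$ and any $w \in V$, the element $w \hat x w^{-1}$ has order $2$ (conjugation preserves order) and satisfies $\pi(w\hat x w^{-1}) = \pi(w)\, x\, \pi(w)^{-1}$, so by uniqueness of the order-$2$ lift it equals $\widehat{\pi(w) x \pi(w)^{-1}}$. Applying this with $w = \hat g_{i+1}$ gives $\widehat{g_{i+1}^{-1} g_i g_{i+1}} = \hat g_{i+1}^{-1}\hat g_i \hat g_{i+1}$. Hence in $\gamma(\sigma_i(\pmb g))$ the two affected factors contribute $\hat g_{i+1}\cdot(\hat g_{i+1}^{-1}\hat g_i \hat g_{i+1}) = \hat g_i \hat g_{i+1}$, the product telescopes, and $\gamma(\sigma_i(\pmb g)) = \gamma(\pmb g)$; invariance under all of $B_r$ follows.

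For (3) I would first note that, since $A_6$ has trivial centre and $V/C_3 \cong A_6$, the subgroup $C_3$ is exactly the centre $Z(V)$, hence central and characteristic. Given $s \in A_6$, choose any lift $\hat s \in V$; as in (2), $\hat s^{-1}\hat g_j \hat s$ has order $2$ and projects to $s^{-1} g_j s$, so $\widehat{s^{-1} g_j s} = \hat s^{-1}\hat g_j \hat s$, independently of the chosen lift because two lifts differ by a central factor. Therefore $\gamma(\phi_s(\pmb g)) = \hat s^{-1}\,\gamma(\pmb g)\,\hat s = \gamma(\pmb g)$, the last equality because $\gamma(\pmb g) \in C_3 = Z(V)$.

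For (4) I would reduce to a single transposition: every odd permutation is $a\tau$ with $a \in A_6$ and $\tau$ a fixed transposition, and by (3) the even part $\phi_a$ leaves $\gamma$ unchanged, so it suffices to show $\phi_\tau$ preserves the order of $\gamma$. The crux is that conjugation $c_\tau$, an outer automorphism of $A_6$, lifts to an automorphism $\tilde c_\tau$ of $V$ covering it. This I would obtain from the theory of universal central extensions: $A_6$ is perfect, its Schur cover $\widehat{A_6} = 6.A_6$ carries a canonical lift of $c_\tau$ acting on the centre $C_6$ through $\mathrm{Aut}(C_6) \cong C_2$, hence trivially on the $C_2$-part; this lift therefore descends to $V = \widehat{A_6}/C_2$, giving $\tilde c_\tau$, whose restriction to $Z(V) = C_3$ is either the identity or inversion. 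Since $\tilde c_\tau$ preserves orders it sends order-$2$ lifts to order-$2$ lifts, so $\widehat{c_\tau(g_j)} = \tilde c_\tau(\hat g_j)$ and $\gamma(\phi_\tau(\pmb g)) = \tilde c_\tau(\gamma(\pmb g))$; as $\tilde c_\tau$ restricts to an automorphism of $C_3$, it preserves the order of $\gamma(\pmb g)$. I expect this lifting step, and pinning down that the induced map on $C_3$ is at worst inversion, to be the main obstacle; it is also the conceptual reason why only the \emph{order} of $\gamma$, and not $\gamma$ itself, is an absolute invariant, since the $S_6$-outer automorphism may invert the centre.
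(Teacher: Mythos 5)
Your proposal is correct, and for parts (1)--(3) it follows essentially the same route as the paper: (1) is the same commutation check that the paper says "can be proved by direct computation" (the paper prefers the geometric remark that conjugation is just a renaming of the fibre); (2) is the paper's telescoping computation $\hat g_2\cdot\hat g_2^{-1}\hat g_1\hat g_2\cdots$, with the useful extra step of justifying $\widehat{g_{i+1}^{-1}g_ig_{i+1}}=\hat g_{i+1}^{-1}\hat g_i\hat g_{i+1}$ by uniqueness of the order-$2$ lift, which the paper uses tacitly; (3) is verbatim the paper's argument via centrality of $\gamma(\pmb{g})$ in $V$. The genuine divergence is in (4). The paper disposes of it in one sentence ("all the lifting maps can be chosen in a natural way, then the order of $\gamma(\pmb{g})$ is well defined"), which is really only a sketch: the issue is precisely what happens under conjugation by an \emph{odd} permutation, i.e.\ under an outer automorphism of $A_6$. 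You supply the missing content: reduce to a single transposition $\tau$ using (3), lift $c_\tau$ to the universal central extension $6.A_6$, observe that the order-$2$ part of the centre is characteristic so the lift descends to $V=3.A_6$, and conclude that the induced map on $C_3=Z(V)$ is an automorphism (identity or inversion), hence order-preserving. This buys a genuinely rigorous proof of (4) where the paper has an assertion, and it also isolates the conceptual point -- that the odd outer automorphism may (and, by the paper's own example with $\phi_{(5,6)}$ swapping the two order-three invariants, does) invert the centre -- which is exactly why only the order of $\gamma(\pmb{g})$, and not $\gamma(\pmb{g})$ itself, survives the absolute action. The only thing to be careful about is that the descent from $6.A_6$ to $V$ needs the lifted automorphism to preserve the $C_2\le C_6$, which holds because that subgroup is characteristic; your "acts trivially on the $C_2$-part" is a correct (slightly stronger) way of saying this.
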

\begin{proof}
Even if part (1) can be proved by direct computation, it is interesting to look at it from the point of view of geometry: since the absolute and inner actions are just a choice of the names of the fibres the claim is straightforward.

\noindent To prove (2), consider one of the generators of the braid group $B_r$, $\sigma_1$, which acts on $\pmb{g}$ by $\sigma_1(g_1,\ldots, g_k)=(g_2, g_2^{-1} g_1 g_2, \ldots)$. Then one obtains
$$\gamma(\sigma_1(\pmb{g}))=\hat{g}_2 \cdot \hat{g}_2^{-1} \cdot \hat{g}_1 \cdot \hat{g}_2 \cdot \ldots = \gamma(\pmb{g})\text{.}$$

\noindent To prove (3), let $t$ be an element in $A_6$, and let $\hat{t}$ be a lifting of $t$ in $V$. Then one obtains
$$\gamma(t^{-1} \pmb{g} t) = \gamma(t^{-1} g_1 t, t^{-1} g_2 t, \ldots) = \hat{t}^{-1} \hat{g}_1 \hat{t} \cdot \hat{t}^{-1} \hat{g}_2 \hat{t} \cdot \ldots = \hat{t}^{-1} \gamma(\pmb{g}) \hat{t} = \gamma(\pmb{g})\text{.}$$
The last equality holds because $\gamma(\pmb{g})$ is in the center of $V$.

\noindent Part (4) holds because all the lifting maps in
$$0 \rightarrow C_3 \rightarrow V \rightarrow A_6 \rightarrow 0$$
can be chosen in a natural way, then the order of $\gamma(\pmb{g})$ is well defined and does not change under the absolute action.
\end{proof}

Notice that the number of possible choices for $\gamma(\pmb{g})$ coincides with the ambiguity index $a(A_6,\cc)$ used in \cite{BK}.

\begin{ese}
Code \ref{prog:liftinginvariant} computes the lifting invariant of an element in $\Ni(A_6,\cc^k)$, the strategy being just a direct computation by using the definition. As an example of the fact that only the order of the lifting is an invariant in $\Ni(A_6,\cc^k)^{abs}$ one can take these two elements of $\Ni(A_6,\cc^5)$
$$\{\a{1234},\a{1324},\a{1425},\a{1623},\a{1635}\}\text{,}$$
$$\{\a{1234},\a{1324},\a{1426},\a{1523},\a{1536}\}\text{.}$$
The two lifting invariants are the two liftings of the identity of order three, and one can see immediately that the automorphism $\phi_{\b{56}}$ sends one element to the other.
\end{ese}


The main result in the case of the absolute moduli space is provided by the following theorem, and it is completely analogous to the results of \cite{FriedAlternating}, namely there is only one connected component for genus zero and exactly two connected components for higher genera.

\begin{teor} \label{teor:finalresult}
The spaces $\H(A_6,\cc^k)^{abs}$, for $k$ greater or equal to six have exactly two connected components 
$$\H_+(A_6,\cc^k)^{abs} \text{ and } \H_-(A_6,\cc^k)^{abs}\text{.}$$
The space $\H(A_6,\cc^5)^{abs}$ is connected. That is, the Hurwitz scheme has exactly one irreducible component if the genus is equal to zero and two irreducible components for genus greater than zero.
\end{teor}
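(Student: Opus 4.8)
The plan is to translate the statement into a count of orbits and then run an induction on $k$. To begin, Riemann--Hurwitz applied to $f\colon X\to\P^1$ of degree $6$ with $k$ branch points, each of ramification type $2^2 1^2$ (two disjoint transpositions, hence $\sum(e_P-1)=2$ at each branch point), gives $2g-2=-12+2k$, that is $g=k-5$. Thus $g=0$ corresponds to $k=5$ and $g>0$ to $k\ge 6$, which is exactly the dictionary in the statement. Since the number of connected components of $\H(A_6,\cc^k)^{abs}$ equals the number of orbits of the Hurwitz action of Definition \ref{defn:HAC} after passing to $\Ni([A_6],\cc^k)/S_6$, and since $N_{S_6}(A_6)=S_6$, it suffices to count the orbits of the combined Hurwitz and absolute ($S_6$-conjugation) action on $\Ni(A_6,\cc^k)$.

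I would first establish the lower bound. By Proposition \ref{prop:InnerAbsolute}(2),(4) the order of the lifting invariant $\gamma(\pmb{g})\in\pi^{-1}(\id)\cong\Z/3$ is constant on every such orbit, and takes only the values $1$ and $3$. For each $k\ge 6$ I would exhibit tuples realizing both values (for $k=6$ this is part of the base-case enumeration, and for larger $k$ one appends cancelling involution pairs as described below), so that there are at least two components. For $k=5$ I would instead check, as in the Example, that only order $3$ occurs, the two order-three lifts $\omega,\omega^{-1}$ being exchanged by the absolute generator $\phi_{\b{56}}$; hence at genus zero there is a single value of the invariant to realize.

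The heart of the matter is the upper bound, i.e.\ completeness of the invariant: any two tuples with the same lifting-invariant order are equivalent under Hurwitz plus absolute action. I would argue by induction on $k$, taking $k=5$ and $k=6$ as base cases handled by the explicit orbit enumeration (the MAGMA computation of the Hurwitz graph) of Sections \ref{App:MonGenZero} and \ref{App:MonGenOne}. For the inductive step with $k\ge 7$, the key reduction is to braid a given $\pmb{g}$ until two consecutive entries coincide, producing a representative $(\pmb{h},x,x)$; the cancelling pair contributes trivially to the invariant since $\hat{x}^2=\id$, so $\pmb{h}\in\Ni(A_6,\cc^{k-2})$ carries the same lifting invariant as $\pmb{g}$. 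Because $\cc$ is a single conjugacy class and $\pmb{h}$ generates $A_6$, the mechanism of Lemma \ref{lemma:friedproductone} lets me conjugate the pair by any element of $A_6$ while restoring $\pmb{h}$, hence normalize $x$ to a fixed $x_0$ and reposition the pair freely. Applying the inductive hypothesis to $\pmb{h}$ then pins the orbit of $\pmb{g}$ down to the one prescribed by its invariant.

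The main obstacle is precisely this reduction, delicate for two reasons. First, after creating the cancelling pair I must guarantee that the truncation $\pmb{h}$ still generates $A_6$ rather than collapsing into a proper (imprimitive, or $A_5$-type) subgroup; this is where $k\ge 7$ enters, since the surplus involutions give the room to place a cancellation away from an essential generating set, and the finitely many exceptional configurations must be cleared directly or by computer. Second, the induction must respect the genus threshold exposed by the lower bound: the trivial invariant value is realizable only for $k\ge 6$, so an order-one tuple cannot be reduced below $k=6$, which is exactly why two base cases are needed and why the odd-length order-one case has to be anchored at $k=6$ rather than $k=5$. Finally, the absolute bookkeeping --- that $\phi_{\b{56}}\in S_6$ interchanges $\omega$ and $\omega^{-1}$ and so fuses the two order-three inner orbits into one absolute orbit --- closes the count at exactly two components for $k\ge 6$ and one for $k=5$.
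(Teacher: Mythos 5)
Your overall architecture (lower bound from the lifting invariant, upper bound by induction on $k$ with base cases $k=5,6$) matches the paper, but your core induction step is different from the paper's and, as stated, does not work. You reduce by braiding $\pmb{g}$ to a representative $(\pmb{h},x,x)$ and dropping the cancelling pair, i.e.\ a step $k\to k-2$. Two problems. First, the claim that one can always braid until two consecutive entries \emph{coincide} is not established anywhere: what Proposition \ref{Prop:reductionfive} actually gives is two entries with the same \emph{fixed points}, whose product is either the identity or again an element of $\cc$ (Proposition \ref{Prop:FixedPointReduction}); equality of the two entries is only one of the two outcomes. Second, and more seriously, a $2$-reduction preserves the parity of $k$, so an odd-length tuple descends through odd lengths only: from $k=7$ you land at $k=5$. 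For a $k=7$ tuple with trivial lifting invariant (which exists, since both components are nonempty for $k\ge 6$), any truncation to a generating $5$-tuple is impossible --- $\H(A_6,\cc^5)^{abs}$ is connected with invariant of order three, so a $5$-tuple with trivial invariant cannot generate $A_6$; equivalently, your own Riemann--Hurwitz dictionary shows a further $2$-reduction would force negative genus. Your remark that ``the odd-length order-one case has to be anchored at $k=6$'' cannot be implemented, because no sequence of $2$-reductions connects $k=7$ to $k=6$. The paper avoids both issues by doing the opposite of what you propose: it reserves three generators at the end of the tuple (Proposition \ref{Prop:threeGeneartors}) to preserve transitivity, finds two entries with common fixed points among the rest, and then \emph{converts every available $2$-reduction into a $1$-reduction} via Lemma \ref{lemma:friedproductone}, so that the induction step is $k\to k-1$ and the equivalence downstairs lifts by Proposition \ref{prop:liftinghurwitz}. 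To repair your argument you would either have to adopt that $1$-reduction mechanism, or supply a separate parity-changing move together with a proof that adjacent equal entries can always be produced --- neither of which is in your proposal.
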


The case of the inner moduli space is slightly different from the results of \cite{FriedAlternating}, due to the fact that the lifting invariant has order three. The connected components turn out to be two for genus zero and three for higher genera. 

\begin{teor} \label{teor:finalresultinner}
Fix once for all $\sigma$, a lifting of order three of the identity. The spaces $\H(A_6,\cc^k)^{in}$, for $k$ greater or equal to six have exactly three connected components 
\begin{itemize}
\item $\H_{\conn{0}}(A_6,\cc^k)^{in}$, the elements that lifts to $\sigma^0=\id_V$.
\item $\H_{\conn{1}}(A_6,\cc^k)^{in}$, the elements that lifts to $\sigma^1$.
\item $\H_{\conn{2}}(A_6,\cc^k)^{in}$, the elements that lifts to $\sigma^2$.
\end{itemize}

The space $\H(A_6,\cc^5)^{in}$ has two connected components 
\begin{itemize}
\item $\H_{\conn{1}}(A_6,\cc^5)^{in}$, the elements that lifts to $\sigma^1$.
\item $\H_{\conn{2}}(A_6,\cc^5)^{in}$, the elements that lifts to $\sigma^2$.
\end{itemize}
\end{teor}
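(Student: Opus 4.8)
The plan is to deduce the inner statement from the already-established absolute Theorem~\ref{teor:finalresult} by analysing how the residual $S_6/A_6$-action refines each absolute component. By Proposition~\ref{prop:InnerAbsolute}(2)--(3) the lifting invariant $\gamma(\pmb{g})\in\pi^{-1}(\id_{A_6})=C_3=\{\id_V,\sigma,\sigma^2\}$ is constant on every orbit of the combined Hurwitz and inner action, so $\H(A_6,\cc^k)^{in}$ splits as a disjoint union of at most three closed-and-open pieces indexed by the value of $\gamma$. It therefore suffices to determine, for each $k$, which values of $\gamma$ actually occur, and to show that each occurring value is a single inner orbit; the number of inner components is then exactly the number of realised values of $\gamma$.

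The heart of the argument is the following key lemma, which I would establish next: for every odd permutation $\tau\in S_6\smallsetminus A_6$ one has $\gamma(\phi_\tau(\pmb{g}))=\gamma(\pmb{g})^{-1}$. Concretely this can be checked on the explicit degree-$18$ model of $V$, but it is in fact forced: conjugation by $\tau$ is an automorphism of $A_6$ that lifts to $V$ either fixing or inverting the central $C_3$, and a lift fixing $C_3$ pointwise would assemble $V$ into a central $C_3$-extension of $S_6$; since the Schur multiplier of $S_6$ is $C_2$ this extension would split, making $V\cong C_3\times A_6$ and contradicting that the Valentiner cover is non-split. Hence $\tau$ must act by inversion on $C_3$. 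Together with Proposition~\ref{prop:InnerAbsolute}(3) this shows that the effect of $\phi_s$ on $\gamma$ depends only on the parity of $s\in S_6$: even permutations fix $\gamma$, while odd ones exchange $\sigma\leftrightarrow\sigma^2$ and fix $\id_V$.

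Granting the lemma, completeness of $\gamma$ for the inner action follows formally. Suppose $\pmb{g},\pmb{g}'\in\Ni(A_6,\cc^k)$ have $\gamma(\pmb{g})=\gamma(\pmb{g}')$; then their invariants have equal order, so by Theorem~\ref{teor:finalresult} they lie in a single absolute orbit. Since by Proposition~\ref{prop:InnerAbsolute}(1) the braid and $S_6$-actions commute, this means $\pmb{g}'=\phi_s(Q(\pmb{g}))$ for a single $s\in S_6$ and $Q\in B_k$. Now $Q$ preserves $\gamma$ by Proposition~\ref{prop:InnerAbsolute}(2), while $\phi_s$ preserves $\gamma$ for $s$ even and inverts it for $s$ odd; as $\gamma(\pmb{g}')=\gamma(\pmb{g})$, the connecting permutation $s$ must be even, i.e. $s\in A_6$. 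Therefore $\pmb{g}$ and $\pmb{g}'$ already lie in the same inner orbit, so each realised value of $\gamma$ constitutes exactly one inner component.

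It remains to count the realised values. For $k\ge 6$ Theorem~\ref{teor:finalresult} provides two absolute components, so both order-$1$ and order-$3$ invariants occur; the order-$1$ case is $\gamma=\id_V$, and within the single order-$3$ absolute component the odd permutation of the key lemma exchanges the $\sigma$- and $\sigma^2$-tuples, so all three values are realised and we obtain $\H_{\conn{0}},\H_{\conn{1}},\H_{\conn{2}}$. For $k=5$, which is the genus-zero case (Riemann--Hurwitz for the $\cc$-type gives $g=k-5$), the base-case computation of Section~\ref{App:MonGenZero} shows every $\pmb{g}\in\Ni(A_6,\cc^5)$ has lifting invariant of order three, so $\gamma=\id_V$ does not occur; the same $\tau$-swap forces both $\sigma$ and $\sigma^2$ to appear, yielding exactly $\H_{\conn{1}},\H_{\conn{2}}$. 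The decisive and only genuinely new ingredient is the key lemma that odd permutations invert the central $C_3$: this is precisely what keeps $\sigma$ and $\sigma^2$ apart for the inner action while the absolute action merges them, and hence what upgrades two absolute components to three inner ones (and one to two at $k=5$); everything else is a formal reduction to Theorem~\ref{teor:finalresult} together with the small genus-zero check that $\gamma=\id_V$ is unattainable at $k=5$.
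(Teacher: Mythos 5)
Your reduction to Theorem \ref{teor:finalresult} is the right idea, and your key lemma is correct and well justified: the canonical order-two lifts are permuted by the lifted automorphism, conjugation by an odd permutation must invert the central $C_3$ (otherwise one would assemble a central $C_3$-extension of $S_6$, which splits and would force the Valentiner cover to split), so $\gamma(\phi_\tau(\pmb{g}))=\gamma(\pmb{g})^{-1}$ for odd $\tau$. This cleanly handles the components with invariant $\sigma$ and $\sigma^2$, and matches the paper's remark that those cases are "straightforward". The genuine gap is in your completeness step for the component $\H_{\conn{0}}(A_6,\cc^k)^{in}$ with trivial invariant: from $\pmb{g}'=\phi_s(Q(\pmb{g}))$ and $\gamma(\pmb{g}')=\gamma(\pmb{g})=\id_V$ you conclude that $s$ must be even, but inversion fixes $\id_V$, so an odd $s$ is perfectly consistent with equality of the invariants and nothing forces $s\in A_6$. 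Since $\H_{\conn{0}}$ is nonempty for $k\geq 6$ (it sits over one of the two absolute components), this is not a vacuous case: your argument leaves open the possibility that the trivial-invariant locus splits into two inner components interchanged by odd permutations.

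This is exactly where the paper does its real work. To show that conjugation by an odd permutation can be absorbed into the inner plus Hurwitz equivalence on $\H_{\conn{0}}(A_6,\cc^6)^{in}$, the paper exhibits an explicit element, namely $\pmb{g}=(\a{1234},\a{1324},\a{1236},\a{1245},\a{1526},\a{4536})$, which is carried by the odd automorphism $\phi_{\a{1234}\b{56}}$ to the same tuple with its third and fourth entries (two commuting elements) transposed; that transposition is realized by a braid generator, so $\phi_\tau(\pmb{g})$ is Hurwitz-equivalent to $\pmb{g}$, and for an arbitrary odd $s$ one writes $\phi_s=\phi_{s\tau^{-1}}\circ\phi_\tau$ with $s\tau^{-1}$ even. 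The paper additionally checks that the inductive machinery of Section \ref{sec_absolute} can be run using only even external automorphisms, so that the argument propagates to all $k\geq 7$. To repair your proof you need an ingredient of this kind (an element of trivial invariant stabilized, up to Hurwitz moves, by an odd automorphism, or a version of the absolute classification using only even automorphisms); the parity lemma alone cannot separate or merge anything inside the $\id_V$-fibre.
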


For the sake of simplicity we will denote in the same way an element in a Nielsen class and a class of an element in the Hurwitz space. Choosing an element $\pmb{g}$ in $\H_{\conn{1}}(A_6,\cc^k)^{in}$ means choosing an element in $Ni(A_6,\cc^k)$ with lifting invariant $\sigma^1$, that makes the corresponding covering belong to the connected component $\H_{\conn{1}}(A_6,\cc^k)^{in}$.

\section{The space $\H(A_6,\cc^r)^{abs}$} \label{sec_absolute}
The study of the space $\H(A_6,\cc^r)^{abs}$ for all the genera relies on the study of the monodromy group for genus zero and one. Then, it is possible to carry on an induction on the number of branch points in order to conclude the classification. Algebraically this means passing from $k$ to $k-1$ elements of the conjugacy class by multiplying two of them; geometrically, if these elements correspond to two points $P_1$ and $P_2$, that coincides with considering the loop in the fundamental group obtained by composing a loop around $P_1$ and a loop around $P_2$. This gives rise to a subgroup of the fundamental group that describes a monodromy in a fewer number of points. In order to carry on the induction step, two further results are necessary. First, one has to prove that it is possible to reduce the number of points without changing the monodromy group and second, one has to prove that is possible to perform such a reduction by keeping all the elements in the conjugacy class $\cc$.

In order to solve the first issue it is convenient to consider the problem of finding the minimum number of generators contained in a sequence of elements of $A_6$.
\begin{defn}
Let $G$ be a finite group. The max-length of $G$ is a natural number that coincides with the maximum possible length of a chain of subgroups of $G$.
\end{defn}

\noindent The following proposition holds in general for every finite group.
\begin{prop} \label{prop:fivegenerators}
Let $G$ be a finite group and let $l$ be the max-length of $G$. If $S:=\{s_1, \ldots, s_n\}$ generates $G$, and $n \geq l$, then there are $l$ elements of $S$ that are still generators.
\end{prop}
\begin{proof}
The proof is an induction on the cardinality of $S$. 
If $n=l$ the claim is trivially true.
Assume $n>l$ and let the result be true for $n-1$. Consider $S_1 := S \smallsetminus \{s_1\}$. If $\langle S_1 \rangle =\langle S \rangle$, one can use the induction hypothesis on $S_1$. If $\langle S_1 \rangle \subseteq \langle S \rangle$, then one can consider $S_2 := S_1 \smallsetminus \{s_2\}$ and proceed as before, constructing a chain of subgroups of $G$ that has maximum possible length $l$. It means that, at least at the l-th step, $S_{l+1}$ must be equal to $S_l$ and then one can use the induction hypothesis to conclude the proof.
\end{proof}

The max-length of $A_6$ is five, thus it is always possible to find five generators in a set of cardinality $n \geq 5$. Notice that there exists a set of five generators of $A_6$ such that it is not possible to find among them $4$ elements that still generate the whole $A_6$. However, the following proposition shows that is it possible to get a better result if one considers only elements in the conjugacy class $\cc$.

\begin{lemma}
Let $H$ be a subgroup of $G$ of order $h$, and $g$ an element in $G \smallsetminus H$ of order $2$. Then the subgroup $\langle H,g\rangle$ has order $2*h*x$ for a certain natural number $x$.
\end{lemma}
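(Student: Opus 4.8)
The plan is to split the claim into two independent facts about $N:=|\langle H,g\rangle|$: first the divisibility $h\mid N$, and second the parity statement that the index $m:=[\langle H,g\rangle:H]$ is even. Together these give $N=h\,m=2hx$ for a natural number $x$, which is exactly the assertion. The first fact is immediate: since $H\le \langle H,g\rangle$, Lagrange's theorem gives $h\mid N$, so we may write $N=h\,m$. The entire content of the lemma is therefore concentrated in showing that $m$ is even.

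To handle the parity I would let $g$ act by left multiplication on the set $X$ of left cosets of $H$ in $\langle H,g\rangle$, so that $|X|=m$. Because $g^2=1$, this is an involution of $X$, and its orbits have size one or two; hence $m\equiv \#\{\text{fixed cosets}\}\pmod 2$. The base coset $H$ is not fixed, since $gH=H$ would force $g\in H$, contrary to hypothesis; in general $xH$ is fixed precisely when $x^{-1}gx\in H$. Thus the non-fixed cosets pair up and contribute an even amount, and the parity of $m$ is governed entirely by the number of cosets $xH$ with $x^{-1}gx\in H$.

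The main obstacle is exactly the control of these fixed cosets: proving $m$ even amounts to showing their number is even, and this is the delicate point, since without additional input the index $[\langle H,g\rangle:H]$ need not be even when $g$ fails to normalize $H$. I therefore expect the proof to exploit the extra structure present where the lemma is applied in the induction. The cleanest sufficient condition is that $g$ normalizes $H$: then $\langle H,g\rangle=H\sqcup Hg$ is literally the union of two cosets, and since $H\cap\langle g\rangle=\{1\}$ (as $g\notin H$ and $|\langle g\rangle|=2$) one gets $N=2h$, i.e.\ $x=1$ outright. Short of normality, the parity is automatic whenever $h$ is odd, because then $\gcd(2,h)=1$ and the two divisibilities $2\mid N$ (from $\langle g\rangle\le\langle H,g\rangle$) and $h\mid N$ already force $2h\mid N$. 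My plan would be to pin down which of these hypotheses the induction actually supplies for the subgroups $H$ and involutions $g\in\cc$ at hand, and to invoke it precisely at the parity step.
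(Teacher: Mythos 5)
You should know at the outset that the paper offers no proof of this lemma whatsoever: it is stated without a proof environment, carries no label, and is never invoked anywhere else in the text, so there is no argument of the authors' to compare yours against. Your reduction is nonetheless the right one: by Lagrange the entire content is that the index $m=[\langle H,g\rangle:H]$ is even, and your coset-pairing argument correctly shows that $m$ has the same parity as the number of cosets $xH$ fixed by left multiplication by $g$, the trivial coset not being among them. The point at which you stop --- observing that this parity cannot be controlled without further hypotheses --- is exactly the right place to stop, because the statement is in fact false. The smallest counterexample is $G=S_3$, $H=\langle (1,2)\rangle$ (so $h=2$) and $g=(1,3)$: then $\langle H,g\rangle=S_3$ has order $6$, which is not of the form $2\cdot 2\cdot x$. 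The failure persists in the paper's own setting: for $H=\langle (1,2)(3,4)\rangle$ and $g=(1,2)(3,5)$, both involutions lying in the class $\cc$ of $A_6$, the product $(1,2)(3,4)\cdot(1,2)(3,5)$ is a $3$-cycle, so $\langle H,g\rangle$ is dihedral of order $6$ and again $4\nmid 6$. An example with $H$ nonabelian: $H$ a Sylow $2$-subgroup of $S_4$ (order $8$) and $g$ a transposition outside it give $\langle H,g\rangle=S_4$ of order $24$, and $16\nmid 24$.

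The two repairs you isolate are precisely the correct ones: if $h$ is odd, then $2\mid N$ and $h\mid N$ together with $\gcd(2,h)=1$ force $2h\mid N$; and if $g$ normalizes $H$, then $\langle H,g\rangle=H\cup Hg$ has order exactly $2h$. Since the lemma plays no role in the paper's actual arguments (the reductions of Section 3 rest on the fixed-point criterion of Proposition \ref{Prop:FixedPointReduction} and on the Magma computations), the honest conclusion is that the statement should be deleted or restated with one of these additional hypotheses; your proposal cannot be completed as written because no correct proof of the stated claim exists.
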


\begin{lemma} \label{lemma:fourgenerators}
Let $S:=\{s_1, \ldots, s_n\}$ generate $A_6$, with $n \geq 4$, and let the $s_i$ belong to the conjugacy class $\cc$. Then there exists a subset of four elements of $S$ that still generate all $A_6$.
\end{lemma}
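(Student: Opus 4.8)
The plan is to pass to a convenient generating set and then exploit the fact that in $A_6$ the class $\cc$ consists of \emph{all} involutions: a nontrivial even permutation of order two in $S_6$ must be a product of exactly two disjoint transpositions, and these form a single class of $45$ elements, so ``generated by elements of $\cc$'' is the same as ``generated by involutions''. First I would apply Proposition~\ref{prop:fivegenerators} (the max-length of $A_6$ being five) to extract from $S$ a subset of five elements of $\cc$ still generating $A_6$; it then suffices to reduce five involutions to four. I would also record the list of maximal subgroups of $A_6$ — the copies of $A_5$ (order $60$, index $6$), of $S_4$ (order $24$, index $15$), and of $C_3^2 \rtimes C_4$ (order $36$, index $10$) — and observe that the only involution-generated ones are $A_5$ and $S_4$: the involutions of $C_3^2 \rtimes C_4$ all lie in its index-two subgroup of order $18$, so that maximal subgroup is not involution-generated.

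The key reduction I would isolate is the following. Suppose three of the five involutions, say $s_a,s_b,s_c$, generate a maximal subgroup $M$ (necessarily $A_5$ or $S_4$). Since the whole set generates $A_6\neq M$, some generator $s_d$ lies outside $M$; as $M$ is maximal, $\langle M,s_d\rangle=\langle s_a,s_b,s_c,s_d\rangle=A_6$, and these four elements are the desired subset. Everything therefore comes down to proving that, among five involutions generating $A_6$, \emph{some three already generate a maximal subgroup}; equivalently, that it cannot happen that every three of them span a subgroup of order at most $18$ (the largest order of a proper involution-generated subgroup other than $A_5$ and $S_4$, realized by $C_3^2\rtimes C_2$). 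Here the preceding order lemma is the natural tool: each time one adjoins an involution outside the current subgroup $H$, the order strictly grows and stays divisible by $\operatorname{lcm}(2,|H|)$, which constrains the chains of orders $2\mid\cdots\mid 360$ that can occur.

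The main obstacle is precisely this last step. A naive length or order count does not suffice, because the max-length of $A_6$ equals five and the strictly increasing chain $C_2<V_4<D_8<S_4<A_6$ \emph{can} be realized by successively adjoining involutions; hence one cannot merely delete a redundant generator from a fixed ordering, and a genuine reselection is forced. I would therefore dispatch the remaining case analysis using the explicit subgroup lattice of $A_6$, ruling out the configurations in which all three-element subsets stay of order $\le 18$ while the five involutions jointly generate $A_6$ — the delicate configuration being when four of the involutions already generate a proper $A_5$ while any three of them span only a dihedral subgroup of order at most ten. This finite verification is naturally carried out by a direct computation in the spirit of the paper's other \textsc{Magma} arguments, and it also accounts for the sharpness remarked in the text: one cannot in general reduce to three generators, in accordance with $C_3^2\rtimes C_4$ failing to be involution-generated.
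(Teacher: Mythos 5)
Your proposal is correct in outline and shares the paper's skeleton: both arguments first invoke Proposition~\ref{prop:fivegenerators} to reduce to a five-element generating set inside $\cc$, and both ultimately settle what remains by a finite verification. Where you differ is in how that verification is organized. The paper simply loops over all five-element generating subsets of $\cc$ (normalized so that the first entry is $\a{1234}$) and checks directly that one generator may be deleted (Code~\ref{prog:from5to4}); you instead route the argument through the maximal subgroup lattice of $A_6$, noting that the only involution-generated maximal subgroups are the $A_5$'s and $S_4$'s, so that it suffices to exhibit a triple generating a subgroup of order greater than $18$ and then adjoin any generator lying outside it. This is a genuinely more structural account --- it also explains why one cannot in general descend to three generators, since $C_3^2\rtimes C_4$ is not involution-generated --- but it comes at the price of replacing the lemma by a strictly stronger intermediate claim, and that claim is still only asserted, not proved: like the paper, you bottom out in an unperformed case check. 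One caution on the logic: ``no configuration has all ten triples of order at most $18$'' is stronger than the lemma, and if some triple generated the order-$18$ group $C_3^2\rtimes C_2$ your maximality step would not apply as written (that subgroup is not maximal); you would need the further observation that its unique maximal overgroup $C_3^2\rtimes C_4$ contains no involutions outside it, so any additional generator still completes it to $A_6$ --- or the remark that such a triple contains a pair generating an $S_3$ whose rotation is a $(3,3)$-element lying in a unique Sylow $3$-subgroup, which already forces a large triple. Finally, be careful with the unnumbered order lemma you lean on: as stated in the paper ($2h$ divides $\lvert\langle H,g\rangle\rvert$) it is false --- two involutions whose product has order three generate a group of order $6$, not divisible by $4$ --- though you only use the harmless Lagrange consequence and correctly conclude that order counting alone cannot finish the proof.
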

\begin{proof}
By Proposition \ref{prop:fivegenerators} one can assume $n$ to be equal to five. Then Code \ref{prog:from5to4} shows that the thesis holds. The strategy is just a case by case analysis listing all the possible sets of $5$ generators, which can be chosen to be disjoint and ordered, and then checking that the claim holds.
\end{proof}

\noindent The reduction from $4$ generators to $3$ generators is not as simple as the previous step. In fact there exist sets of $4$ generators in the conjugacy class $\cc$ that can not be reduced to have cardinality three just by taking one out; an example is provided by
\begin{equation} \label{Eqn:exampleNonRed4To3}
\{\a{1234}, \a{1235}, \a{1246}, \a{1324}\}\text{.}
\end{equation}
In order to proceed further, one has to use the Hurwitz action defined in \ref{defn:HAC} to modify the elements.

\begin{prop} \label{Prop:threeGeneartors}
Let $S:=\{s_1, \ldots, s_n\}$ be a ordered set of generators of $A_6$ with cardinality $n \geq 4$, such that all the $s_i$ belong to the conjugacy class $\cc$. Then, up to the Hurwitz action on $S$, it is possible to find three elements that still generate $A_6$.
\end{prop}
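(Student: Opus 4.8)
The plan is to combine the group-order reductions already established with a finite verification of Hurwitz orbits. First I would invoke Lemma \ref{lemma:fourgenerators} to pass from an arbitrary ordered generating set $S$ of cardinality $n \geq 4$ to an ordered $4$-tuple $(s_1, s_2, s_3, s_4)$, all of whose entries lie in $\cc$ and which still generates $A_6$. Since the Hurwitz action of Definition \ref{defn:HAC} preserves both the generated subgroup and the conjugacy class of every entry, it suffices to prove the statement for $n = 4$. Thus the goal becomes: every ordered $4$-tuple in $\cc^4$ generating $A_6$ is carried, by some element of the braid group $B_4$, to a tuple one of whose three-element subsets already generates $A_6$. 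The example \eqref{Eqn:exampleNonRed4To3} shows that plain deletion is insufficient, so the Hurwitz action is genuinely needed.

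Next I would analyze why a $4$-tuple can fail to be reducible by deletion. Call a tuple \emph{bad} if none of its four three-element subsets generates $A_6$; equivalently, each such subset is contained in a maximal subgroup of $A_6$. Using the preceding lemma on the order of $\langle H, g\rangle$ when $g$ has order $2$ — every entry of the tuple is an involution — together with the list of maximal subgroups of $A_6$ (the index-$6$ copies of $A_5$, the index-$10$ groups of order $36$, and the index-$15$ copies of $S_4$), one controls which proper subgroups can arise as $\langle s_i, s_j, s_k\rangle$: the order-doubling forced by adjoining an order-$2$ element severely restricts the possible chains of subgroups, in the same spirit as Proposition \ref{prop:fivegenerators}. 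This both bounds the list of bad tuples to finitely many orbit types and indicates how a single generator $\sigma_i$, which replaces an entry by a conjugate $s_{i+1}^{-1} s_i s_{i+1}$, can push a three-element subset out of the maximal subgroup that previously contained it.

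Finally I would carry out the verification. Having reduced to $n = 4$, the set of generating tuples in $\cc^4$ is finite, so its partition into $B_4$-orbits is a finite computation, and for each orbit it then suffices to exhibit a single representative possessing a generating three-element subset. Concretely I would enumerate the bad tuples, compute their Hurwitz orbits, and check that each such orbit also contains a reducible tuple — precisely the type of case-by-case MAGMA analysis already used for Lemma \ref{lemma:fourgenerators}.

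The main obstacle I anticipate is completeness rather than any isolated hard step: one must ensure that no Hurwitz orbit of generating $4$-tuples lies entirely inside the bad set, since otherwise the reduction would fail on that orbit. Establishing this requires either the structural argument sketched above (adjoining an involutive conjugate provably escapes the relevant maximal subgroup) or an exhaustive orbit computation confirming that every orbit meets the reducible tuples. Organizing the finitely many maximal-subgroup configurations so that this check is simultaneously complete and of manageable size is where the real work lies.
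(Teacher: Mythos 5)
Your proposal matches the paper's proof: reduce to $n=4$ via Lemma \ref{lemma:fourgenerators}, then verify by a finite, case-by-case computation with the Hurwitz action (Code \ref{prog:final4to3}) that every generating $4$-tuple in $\cc^4$ reaches a tuple with a generating three-element subset. The structural remarks about maximal subgroups are extra motivation but the core argument is the same.
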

\begin{proof}
By Lemma \ref{lemma:fourgenerators} one can assume $n$ equal to four. Code \ref{prog:final4to3} concludes the proof using a case by case analysis.
\end{proof}

The second issue concerns finding two elements $g_1$ and $g_2$ that can be used to reduce the number of branch points. As described before, from the algebraic point of view the monodromy type can be computed just by multiplying the two elements $g_1$ and $g_2$. Since we are working in $\cc$, it is necessary that $g_1 \cdot g_2$ still belongs to $\cc$.

\begin{prop} \label{Prop:FixedPointReduction}
Let $g_1$ and $g_2$ be two elements in the conjugacy class $\cc$ of $A_6$. If $g_1$ and $g_2$ have the same fixed points, then $g_1 g_2$ either belongs to $\cc$, or is the identity.
\end{prop}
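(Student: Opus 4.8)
The plan is to reduce the statement to an elementary computation inside a single Klein four-subgroup. First I would record the cycle structure of $\cc$: an element of $\cc$ is a product of two disjoint transpositions, so it moves exactly four of the six points and fixes the remaining two. Thus each $g \in \cc$ fixes an unordered pair of points, and the hypothesis that $g_1$ and $g_2$ have the same fixed points says precisely that they fix a common pair $\{i,j\} \subset \{1,\dots,6\}$; equivalently, both $g_1$ and $g_2$ act only on the complementary four-element set $T := \{1,\dots,6\} \smallsetminus \{i,j\}$.

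Next I would identify the relevant subgroup. Inside the symmetric group $\operatorname{Sym}(T) \cong S_4$ the products of two disjoint transpositions are exactly the three elements obtained by pairing up the four points of $T$, and these three, together with $\id$, form the Klein four-group $V \leq \operatorname{Sym}(T)$. The content of the previous paragraph is exactly that $g_1$ and $g_2$ both lie in $V \smallsetminus \{\id\}$.

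The conclusion is then read off the multiplication table of $V$. Since $V$ is abelian of exponent two, if $g_1 = g_2$ then $g_1 g_2 = \id$, while if $g_1 \neq g_2$ then $g_1 g_2$ is the third non-identity element of $V$, which is again a product of two disjoint transpositions on $T$ fixing the pair $\{i,j\}$, and hence belongs to $\cc$. This exhausts the two cases in the statement.

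The only genuine step is the structural observation of the first two paragraphs, namely that sharing the fixed pair forces both elements into one common Klein four-subgroup; I expect no real obstacle beyond this, as the remainder is just the abelian, exponent-two structure of $V$. One could, if desired, make the $V$ multiplication explicit on a representative pair, e.g. $\a{1234}\cdot\a{1324} = \a{1423}$ (all fixing $\{5,6\}$), but the group-theoretic argument already settles both cases at once.
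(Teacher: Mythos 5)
Your proof is correct and in substance the same as the paper's: the paper normalizes $g_1$ to $\a{1234}$ and $g_2$ to $\a{1234}$ or $\a{1324}$ by an external automorphism and calls the rest straightforward, which is exactly the Klein four-group computation you spell out. Your phrasing via the Klein four-group on the complementary four points is a slightly more structural way of saying the same thing; no gap.
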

\begin{proof}
Up to an external automorphism of even parity, one can assume the first element to be $\a{1234}$ and the second to be either $\a{1234}$ or $\a{1324}$. The claim is then straightforward.
\end{proof}

The best case scenario for the induction would be finding two elements with the same fixed points in a set of cardinality four.

\begin{prop} \label{Prop:reductionfour}
Let $\{g_1, g_2, g_3, g_4\}$ be an ordered set of elements of $A_6$. Then, up to the Hurwitz action and to external automorphisms either it is possible to find two elements with the same fixed points or the set is
$$\{\a{1234}, \a{1235}, \a{1634}, \a{1645}\}\text{.}$$
\end{prop}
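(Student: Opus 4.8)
The plan is to recast the statement as a finite combinatorial problem about $2$- and $4$-element subsets of $\{1,\ldots,6\}$, in the same spirit as Lemma~\ref{lemma:fourgenerators} and Proposition~\ref{Prop:threeGeneartors}. Every $g\in\cc$ is determined by its support $\operatorname{supp}(g)$, the $4$-subset it moves, together with the matching of $\operatorname{supp}(g)$ into the two transpositions of $g$; its fixed points form the complementary $2$-subset $\operatorname{Fix}(g)$, which I view as an edge $e(g)$ of the complete graph on $\{1,\ldots,6\}$. With this dictionary, two entries have the same fixed points exactly when they have the same support, i.e.\ the same edge. The two moves act transparently: an external automorphism relabels the six points simultaneously, hence permutes the four edges $e_i:=e(g_i)$, and preserves $\cc$ (as in the proof of Proposition~\ref{Prop:FixedPointReduction}); the Hurwitz generator $\sigma_i$ of Definition~\ref{defn:HAC} sends $(g_i,g_{i+1})$ to $(g_{i+1},g_{i+1}^{-1}g_ig_{i+1})$, so on fixed pairs it acts by $(e_i,e_{i+1})\mapsto(e_{i+1},g_{i+1}(e_i))$ while conjugating the matching of the moved entry by $g_{i+1}$.

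If two of the $e_i$ already coincide there is nothing to prove, so I assume the four edges pairwise distinct and, using an external automorphism, normalise $g_1=\a{1234}$, hence $e_1=\{5,6\}$. A convenient invariant is the product $P:=g_1g_2g_3g_4$: it is fixed by every Hurwitz move, since $g_{i+1}\cdot g_{i+1}^{-1}g_ig_{i+1}=g_ig_{i+1}$ leaves the product of the two affected entries unchanged, and it is merely conjugated by external automorphisms, so the conjugacy class of $P$ is an invariant of the whole orbit. One computes that the candidate tuple $\{\a{1234},\a{1235},\a{1634},\a{1645}\}$ has $P=\id$. The first block of the argument is then to show that every tuple with $P\neq\id$ can be moved, by the edge action above, to one with two equal supports; after fixing $g_1$ this is a finite verification over the supports and matchings of $g_2,g_3,g_4$, which I would organise by the orbits of the centraliser $C_{S_6}(g_1)$ (of order $16$) on the edge $e_2$, exactly the kind of case analysis discharged by MAGMA elsewhere in the paper.

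There remain the tuples with $P=\id$ and pairwise distinct fixed pairs. Here I claim that, up to external automorphism, a single orbit never admits two equal supports, with canonical representative $\{\a{1234},\a{1235},\a{1634},\a{1645}\}$. The structural reason is that for this orbit the four fixed edges always form a Hamiltonian path on five of the six points --- equivalently, the four supports share the one remaining point; since a path on five vertices has four pairwise distinct edges, no Hurwitz move or relabelling can ever collapse two supports. I would make this precise by verifying that the property ``the four fixed pairs form a Hamiltonian path on five points, with the matchings as in the displayed tuple'' is preserved by each generator $\sigma_i$ and by $S_6$ (a short check using the edge action $(e_i,e_{i+1})\mapsto(e_{i+1},g_{i+1}(e_i))$), and that the displayed tuple has this shape; together these pin the orbit down completely.

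The main obstacle is precisely this irreducibility step. Producing reductions is easy, because there is ample room to slide the edges around; the difficulty is to prove that the displayed configuration can \emph{never} be reduced, which forces one to control the entire Hurwitz orbit at once. The honest way to do this is to isolate the preserved invariant --- the Hamiltonian-path shape of the fixed edges, rigidified by the matching data and by the relation $P=\id$ --- rather than to search the orbit blindly; finding and checking that invariant, and confirming that it singles out the one exceptional tuple, is the real content of the proof.
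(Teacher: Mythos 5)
Your reformulation in terms of edges of $K_6$ is sound --- the action $(e_i,e_{i+1})\mapsto(e_{i+1},g_{i+1}(e_i))$ is correct, as is the observation that $P=g_1g_2g_3g_4$ is a braid invariant equal to $\id$ for the exceptional tuple --- and your instinct to replace the paper's brute-force search (Code \ref{prog:find2fixedpoints} simply enumerates all quadruples beginning with $(1,2)(3,4)$ and explores their Hurwitz orbits) by a preserved structural invariant is exactly the right thing to want. But the invariant you propose is not invariant. Apply $\sigma_2$ and then $\sigma_1$ to the displayed tuple: conjugating $(1,2)(3,5)$ by $(1,6)(3,4)$ gives $(2,6)(4,5)$, and then conjugating $(1,2)(3,4)$ by $(1,6)(3,4)$ gives $(2,6)(3,4)$, so one reaches
$$\bigl((1,6)(3,4),\ (2,6)(3,4),\ (2,6)(4,5),\ (1,6)(4,5)\bigr)\text{,}$$
whose fixed pairs are $\{2,5\},\{1,5\},\{1,3\},\{2,3\}$: a $4$-cycle covering only the four points $\{1,2,3,5\}$, not a Hamiltonian path on five points, and the supports now share two common points rather than one. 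The four pairs happen to remain distinct, so the proposition is not contradicted, but the property you planned to verify as preserved fails already at the second braid move, and with it the entire argument that the exceptional configuration can never be reduced.

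The other half of the plan is also too coarse: $P=\id$ does not isolate the exceptional orbit. The quadruple $\bigl((1,2)(3,4),(2,3)(4,5),(1,4)(3,5),(1,5)(2,4)\bigr)$ lies in $\cc$, has product $\id$, and has four pairwise distinct fixed pairs $\{5,6\},\{1,6\},\{2,6\},\{3,6\}$, yet it generates a dihedral group of order $10$ fixing the point $6$, while the displayed tuple generates an intransitive group of order $18$ preserving $\{1,2,6\}\sqcup\{3,4,5\}$; since the generated subgroup (up to conjugacy) is an invariant of the Hurwitz action combined with relabelling, these are different orbits, so your ``single orbit'' claim for the $P=\id$ case is false and such tuples still have to be reduced by some argument. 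The partition just mentioned is in fact the usable invariant for the irreducibility step: inside that order-$18$ group an element of $\cc$ is determined by its fixed pair, so the question becomes whether a diagonal braid action on two $4$-tuples of transpositions of $S_3$ can ever align two coordinates simultaneously --- but settling that is again a finite orbit computation, which is precisely what the paper's code does from the outset. As it stands, your proposal has a genuine gap at its central step and does not yet constitute a proof.
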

\begin{proof}
One can perform a case by case analysis in which the Hurwitz action is used. This is done with Code \ref{prog:find2fixedpoints}. The strategy is a case by case analysis listing all the possible sets of four elements and then using the Hurwitz action to see if there are two elements with the same fixed points. Its output is made by all the sets for which the Hurwitz action does not work. Up to external automorphisms all these sets are equivalent to 
$$\{\a{1234}, \a{1235}, \a{1634}, \a{1645}\}\text{.}$$
\end{proof}

The previous proposition shows that is not always possible to find the expected reduction in a set of cardinality four. As a consequence, the proof for $k=7$ will not be part of the induction step, and is carried on at the beginning of the proof of Theorem \ref{teor:finalresult}. Luckily, the following proposition shows that it is always possible to find two elements with the same fixed points in a set of cardinality five.

\begin{prop} \label{Prop:reductionfive}
Let $S:=\{g_1, \ldots, g_5\}$ be an ordered set of elements of $A_6$. Up to the Hurwitz action it is possible to find two elements with the same fixed points. 
\end{prop}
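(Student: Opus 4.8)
The plan is to reduce to the four-element analysis of Proposition \ref{Prop:reductionfour} and then to exploit the extra freedom provided by the fifth element. Recall first that every element of $\cc$ fixes exactly two of the six points, so the fixed-point set of $g_i$ is a $2$-subset $F_i\subseteq\{1,\dots,6\}$, and that a Hurwitz move $\sigma_i$ replaces $g_i$ by $g_{i+1}^{-1}g_ig_{i+1}$, whose fixed-point set is $g_{i+1}^{-1}(F_i)$. Thus the Hurwitz action both permutes the entries and transports their fixed-point sets by conjugation, while keeping all entries inside $\cc$.

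First I would apply Proposition \ref{Prop:reductionfour} to the initial segment $\{g_1,g_2,g_3,g_4\}$, using the sub-braid group generated by $\sigma_1,\sigma_2,\sigma_3$ (which acts on $S$ fixing $g_5$) together with the external automorphisms induced by $S_6$. Since these automorphisms permute the six points, they preserve the relation ``two entries share their fixed-point set''; hence, applying the same automorphism simultaneously to $g_5$, we may assume that either two of the first four entries already have equal fixed points, in which case there is nothing left to prove, or that $\{g_1,g_2,g_3,g_4\}=\{\a{1234},\a{1235},\a{1634},\a{1645}\}$, whose fixed-point sets are the four distinct pairs $\{5,6\},\{4,6\},\{2,5\},\{2,3\}$.

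It then remains to treat this single exceptional configuration with $g_5$ ranging over all of $\cc$. Here I would use the full Hurwitz action, and in particular the generator $\sigma_4$, which conjugates across the pair $(g_4,g_5)$ and was unavailable in the four-element setting; combined with re-applying $\sigma_1,\sigma_2,\sigma_3$, this is exactly the additional freedom that was missing in Proposition \ref{Prop:reductionfour}. Because the fixed-point set of each entry depends only on its $2$-subset, there are only fifteen possibilities for $F_5$, realized by the $45$ elements of $\cc$, so the verification that some Hurwitz word produces two entries with equal fixed points is a finite check, carried out by a Magma routine analogous to Code \ref{prog:find2fixedpoints}.

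The main obstacle I expect is organizational rather than conceptual: one must ensure that the search over Hurwitz words is simultaneously exhaustive and bounded, and one must handle the external-automorphism bookkeeping with care. In particular the exotic outer automorphism of $A_6$ preserves the class $\cc$ but does not act by permuting the six points, so it does not respect the ``same fixed points'' relation; I would therefore restrict the external automorphisms used in the reduction to those induced by $S_6$, for which the relation is preserved, and let the computation range over a complete set of representatives for $g_5$ under the stabilizer of the exceptional configuration.
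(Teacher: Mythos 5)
Your proposal is correct, but it takes a genuinely different route from the paper's. The paper's proof of Proposition \ref{Prop:reductionfive} is a single direct brute-force computation (Code \ref{prog:find2fixedpoints5}): after normalizing the first entry to $(1,2)(3,4)$ by an external automorphism and ordering the tuple via Lemma \ref{Lemma:ordering}, it enumerates all remaining quadruples from $\cc$ and runs the Hurwitz search on each five-tuple, verifying that no irreducible case survives. You instead factor the problem through Proposition \ref{Prop:reductionfour}: restricting the braid action to $\langle\sigma_1,\sigma_2,\sigma_3\rangle$ so that $g_5$ is untouched, you conclude that either two of the first four entries already share fixed points, or the initial quadruple is (up to an $S_6$-conjugation applied simultaneously to all five entries) the exceptional configuration $\{(1,2)(3,4),(1,2)(3,5),(1,6)(3,4),(1,6)(4,5)\}$, and you then dispose of that single configuration, with $g_5$ ranging over $\cc$, by a small finite check in which $\sigma_4$ becomes available. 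This buys a drastic reduction of the search space (45 residual cases instead of on the order of $\binom{44}{4}$ quadruples) and makes visible \emph{why} five entries suffice where four do not, at the cost of an extra layer of bookkeeping; your observations that $S_6$-conjugation transports all fixed-point sets coherently, and that the exotic outer automorphism of $A_6$ must be excluded because it does not respect fixed points, are exactly what is needed to make that bookkeeping sound (and the latter is consistent with the paper, where ``external automorphisms'' always means conjugation by elements of $S_6$). The only minor wobble is the suggestion that the fifteen possibilities for the fixed-point set of $g_5$ suffice: the Hurwitz action depends on $g_5$ itself, not merely on its fixed-point set, so the residual check must range over representatives of the 45 elements, as you in fact state at the end. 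Both arguments remain computer-assisted; neither eliminates the final finite verification.
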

\begin{proof}
This is done with a case by case analysis carried out with Code \ref{prog:find2fixedpoints5}. 
\end{proof}

Let $\pmb{g}:=\{g_1, \ldots, g_k\}$ be a ordered set of elements in $\cc$, with $k \geq 5$. By Proposition \ref{Prop:reductionfive}, there are two elements with the same fixed points. Up to the Hurwitz action one can assume them to be $g_1$ and $g_2$. By multiplying these elements, as proved in Proposition \ref{Prop:FixedPointReduction}, two cases can arise. If $g_1 = g_2$, that is the product of $g_1$ and $g_2$ is the identity, one can reduce $\pmb{g}$ to $\bar{\pmb{g}}:=\{g_3, \ldots, g_k\}$, an element of length $k-2$; this is called \textbf{2-reduction}. If $g_1 \neq g_2$, one can reduce $\pmb{g}$ to $\bar{\pmb{g}}:=\{g_1 \cdot g_2, g_3, \ldots, g_k\}$, an element of length $k-1$; this is called \textbf{1-reduction}. The following proposition describes the behaviour of the lifting invariant under such reductions.

\begin{prop} \label{prop:liftingandreduction}
Let $\pmb{g}:=\{g_1, \ldots, g_k\}$ be an ordered set of $k \geq 5$ elements in the conjugacy class $\cc$ such that the product of the $g_i$ is equal to the identity. Let $\bar{\pmb{g}}$ be the reduction of $\pmb{g}$. Then, $\pmb{g}$ and $\bar{\pmb{g}}$ have the same lift to the Valentiner group. 
\end{prop}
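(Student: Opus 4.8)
The plan is to analyze how the lifting invariant $\gamma(\pmb{g}) = \prod \hat{g}_i$ behaves under each of the two reduction types separately, showing in both cases that the product of the order-$2$ liftings is unchanged. The key structural fact is that $\gamma(\pmb{g})$ is defined via the canonical order-$2$ lifts $\hat{g}_i$ of each $g_i \in \cc$, and since the reduction only modifies the first two entries $g_1, g_2$ (which we may assume by the Hurwitz action share the same fixed points), everything away from the first two factors is untouched. So the entire argument reduces to comparing $\hat{g}_1 \cdot \hat{g}_2$ with the lift coming from the reduced tuple.

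For the \textbf{1-reduction} case, where $g_1 \neq g_2$ but they have the same fixed points, Proposition \ref{Prop:FixedPointReduction} guarantees $g_1 g_2 \in \cc$, so the reduced tuple $\bar{\pmb{g}} = \{g_1 g_2, g_3, \ldots, g_k\}$ is again a tuple of elements of $\cc$ and its lifting invariant is $\widehat{g_1 g_2} \cdot \hat{g}_3 \cdots \hat{g}_k$. What I must show is exactly that $\widehat{g_1 g_2} = \hat{g}_1 \cdot \hat{g}_2$, i.e.\ that the product of the two canonical order-$2$ lifts is itself the canonical order-$2$ lift of the product. Since $\hat{g}_1$ and $\hat{g}_2$ are involutions in $V$ and $g_1 g_2 \in \cc$, I would verify that $\hat{g}_1 \hat{g}_2$ lies above $g_1 g_2$ (which is automatic from $\pi$ being a homomorphism) and has order $2$; uniqueness of the order-$2$ lift in $\pi^{-1}(g_1 g_2)$ then forces $\hat{g}_1 \hat{g}_2 = \widehat{g_1 g_2}$. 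The cleanest route is via the normalization already used in Proposition \ref{Prop:FixedPointReduction}: after applying an external automorphism of even parity one may take $g_1 = \a{1234}$ and $g_2 = \a{1324}$, whence $g_1 g_2$ is a concrete element of $\cc$, and this single reduced case can be checked directly in the Valentiner group (indeed verified by the MAGMA code computing lifting invariants).

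For the \textbf{2-reduction} case, where $g_1 = g_2$, the reduced tuple is $\bar{\pmb{g}} = \{g_3, \ldots, g_k\}$ and its lifting invariant is $\hat{g}_3 \cdots \hat{g}_k$. Here I need $\hat{g}_1 \cdot \hat{g}_2 = \hat{g}_1^2 = 1$ in $V$, which is immediate: $\hat{g}_1$ is by definition the \emph{order-$2$} lift of $g_1$, so $\hat{g}_1^2 = 1$, and the two leading factors cancel outright. Thus $\gamma(\pmb{g}) = \hat{g}_1 \hat{g}_1 \hat{g}_3 \cdots \hat{g}_k = \hat{g}_3 \cdots \hat{g}_k = \gamma(\bar{\pmb{g}})$, and the invariant is preserved. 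This case is the easy one precisely because the defining property of $\hat{g}_i$ as an involution does all the work.

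The main obstacle is the identity $\widehat{g_1 g_2} = \hat{g}_1 \hat{g}_2$ in the 1-reduction case, since a priori the product of two involutions in $V$ need not be an involution, and indeed the product of two order-$2$ lifts of $\cc$-elements will generally fail to have order $2$ when the fixed points differ. The hypothesis that $g_1, g_2$ share their fixed points is what is being exploited: it is exactly the condition under which, by the normalization in Proposition \ref{Prop:FixedPointReduction}, $\hat{g}_1 \hat{g}_2$ stays in the correct (order-$2$) class rather than picking up a nontrivial central factor from $C_3$. I would make this precise by reducing to the normalized representatives and appealing to the explicit computation in the Valentiner group, noting that the shared-fixed-point hypothesis is the crucial input without which the statement is false.
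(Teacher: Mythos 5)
Your proposal is correct and follows essentially the same route as the paper: split into the $2$-reduction case, where $\hat{g}_1\hat{g}_2=\hat{g}_1^2=\id$ because the canonical lift is an involution, and the $1$-reduction case, where the point is the identity $\widehat{g_1 g_2}=\hat{g}_1\hat{g}_2$. You are in fact more careful than the paper on the latter step (the paper simply asserts that $g_1g_2\in\cc$ forces the lift of the product to be the product of the lifts), and your justification --- normalizing via Proposition \ref{Prop:FixedPointReduction} to $g_1=\a{1234}$, $g_2=\a{1324}$ and invoking uniqueness of the order-$2$ element in $\pi^{-1}(g_1g_2)$ --- correctly supplies the missing verification.
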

\begin{proof}
If $\bar{\pmb{g}}$ is a $2$-reduction, $g_1$ was equal to $g_2$, then also the liftings $\hat{g}_1$ and $\hat{g}_2$ are equal. Then $\hat{g}_1 \cdot \hat{g}_2$ is the identity and the lifting invariants of $\pmb{g}$ and $\bar{\pmb{g}}$ are the same.
If $\bar{\pmb{g}}$ is a $1$-reduction, one has to consider the lift of the element $g_1 \cdot g_2$. Since $g_1 \cdot g_2$ is still in $\cc$ it means that the lifting of $g_1 \cdot g_2$ is the product of the lifting of $g_1$ and the lifting of $g_2$, thus the lifting of $\pmb{g}$ and $\bar{\pmb{g}}$ are the same.
\end{proof}

Neither $\pmb{g}$ or $\bar{\pmb{g}}$ is supposed to be transitive in Proposition \ref{prop:liftingandreduction}, because the canonical lift can be defined for every sequence of elements with products one. Every time this reduction is used on an element $\pmb{g}$ in a Nielsen class, one should check that the result $\bar{\pmb{g}}$ is still transitive. In the induction step of the proof of Theorem \ref{teor:finalresult}, this is guaranteed by Proposition \ref{Prop:threeGeneartors} to select three generators, which guarantees the transitivity. Notice that, in order to obtain an element of $\cc$ from the multiplications of two other elements, it is sufficient that the two elements generate a subgroup of order lesser or equal that four. Despite of this more general result, in the proof of Theorem \ref{teor:finalresult}, it is convenient to show that it is possible to consider only reduction of type $1$ in order to apply the following

\begin{prop} \label{prop:liftinghurwitz}
Let $\pmb{g}$ and $\pmb{h}$ belong to $Ni(A_6,\cc^k)$, $k > 5$, with $1$-reduction to $\bar{\pmb{g}}$ and $\bar{\pmb{h}}$. If $\bar{\pmb{g}}$ and $\bar{\pmb{h}}$ are equivalent under the Hurwitz action, then this action can be lifted to obtain an equivalence between $\pmb{g}$ and $\pmb{h}$.
\end{prop}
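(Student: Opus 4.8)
The plan is to lift the given braid $Q\in B_{k-1}$ with $Q(\bar{\pmb g})=\bar{\pmb h}$ to a braid in $B_k$ one generator at a time, and then to reconcile the two length-$k$ tuples that this produces by a fiber computation. Throughout I call a tuple in $\Ni(A_6,\cc^k)$ \emph{reducible} if it has an adjacent pair of entries in $\cc$ sharing their two fixed points; since $\cc$ is the class of double transpositions, two such entries together with $\id$ generate a Klein four-group $V$ inside the pointwise stabilizer of the two common fixed points, and their product is the third involution of $V$ (this is exactly the mechanism of Proposition~\ref{Prop:FixedPointReduction}). Reduction collapses the marked pair to its product. Both $\pmb g$ and $\pmb h$ are reducible with the marked pair in front, by the definition of $1$-reduction, and I will treat that pair as a single ``block''.

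For the lift I note that away from the block a braid generator lifts verbatim: for $i\ge 2$ the generator $\sigma_i^{(k-1)}$ acts on reduced entries to the right of the block, hence on original entries in positions $\ge 3$, and lifts to $\sigma_{i+1}^{(k)}$, which leaves the block untouched and visibly realizes $\sigma_i^{(k-1)}$ after reduction. The only delicate generator is the one moving an entry past the block. Writing the front block as $(g_1,g_2)$ with $a=g_1g_2$ and letting $g_3$ be the entry to its right, applying $\sigma_2^{(k)}$ and then $\sigma_1^{(k)}$ gives
\[
(g_1,g_2,g_3,\dots)\ \xrightarrow{\ \sigma_2^{(k)}\ }\ (g_1,g_3,g_3^{-1}g_2g_3,\dots)\ \xrightarrow{\ \sigma_1^{(k)}\ }\ (g_3,\,g_3^{-1}g_1g_3,\,g_3^{-1}g_2g_3,\dots).
\]
The conjugated pair $(g_3^{-1}g_1g_3,\,g_3^{-1}g_2g_3)$ still shares its fixed points, so the block has slid one step to the right, and collapsing it yields precisely $\sigma_1^{(k-1)}(\bar{\pmb g})=(g_3,\,g_3^{-1}ag_3,\dots)$. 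Iterating over the word $Q$ therefore produces a braid $\tilde Q\in B_k$ such that $\tilde Q(\pmb g)$ is reducible and reduces to $\bar{\pmb h}$; the only feature left uncontrolled is the final \emph{position} of the block.

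Now $\tilde Q(\pmb g)$ and $\pmb h$ have the same reduction $\bar{\pmb h}$, so the proof is complete once I show that any two reducible preimages of a fixed tuple of $\Ni(A_6,\cc^{k-1})$ are Hurwitz-equivalent in $B_k$. When the marked pair sits in the same position this is immediate: the factorizations $xy=a$ with $x,y\in\cc$ of equal fixed points are exactly the two orderings of $V\smallsetminus\{\id,a\}$, and $\sigma_1^{(k)}$ interchanges them because $V$ is abelian. The substantive point, and the step I expect to be the main obstacle, is to slide the marked pair back to the front \emph{without} changing the reduction, i.e. to compare preimages whose marked pair occupies different adjacent positions. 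I would reduce this to a single local move: passing the block from positions $(1,2)$ to $(2,3)$ over a fixed reduction amounts to connecting, by braids supported on the first three strands, two length-three $\cc$-factorizations of the common product $c_1c_2$ (with $c_1,c_2$ the first two reduced entries), one carrying a Klein-pair in front and the other a Klein-pair in back. Both factorizations live in the finite group $\langle c_1,c_2\rangle\le A_6$, and there are only finitely many configurations of $(c_1,c_2)$, so the existence of the connecting braid is a finite verification of the same type as Propositions~\ref{Prop:threeGeneartors} and \ref{Prop:reductionfive}; transitivity of the reduced tuple ensures enough strands are available to realize the moves.

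Combining the pieces: $\tilde Q(\pmb g)$ is brought by block-sliding to a front-reducible tuple with reduction $\bar{\pmb h}$, which by the Klein four-group computation is Hurwitz-equivalent to $\pmb h$; hence $\pmb g$ is Hurwitz-equivalent to $\tilde Q(\pmb g)$ and thence to $\pmb h$, giving the lifted equivalence. The hard part is precisely the block-sliding lemma, where one must confirm that the local reconfiguration is always available — this is where the special structure of $\cc$ (double transpositions, with ``equal fixed points'' forcing a Klein four-group) is essential and where a computer check in the paper's style seems unavoidable.
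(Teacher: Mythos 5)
Your two main ingredients --- cabling the braid so that the marked pair travels as a single block, and using the Klein four-group to identify the two orderings of a block with prescribed product --- are exactly the paper's proof: there the Hurwitz action on $\bar{\pmb h}$ is extended to $\pmb h$ ``by conjugating always together the elements $h_1$ and $h_2$'', and the resulting front pair is then normalized to $(\a{1324},\a{1423})$ by a single Hurwitz move on those two entries, which is legitimate because the two factorizations of $\a{1234}$ into a pair of double transpositions with common fixed points are the two orderings of the remaining involutions of a Klein four-group. Where you diverge is in tracking the final \emph{position} of the block: the paper simply asserts that the lifted action returns the block to positions $(1,2)$, which amounts to assuming that the underlying permutation of the connecting braid fixes the first strand. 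You are right that this is the delicate point of the argument.

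However, your proposed resolution fails. The ``block-sliding lemma'' as you state it --- that the front-blocked and back-blocked length-three $\cc$-factorizations of $c_1c_2$ are connected by braids supported on those three strands, so that a finite check over the pairs $(c_1,c_2)$ suffices --- is false. Take $c_1=\a{1234}$ and $c_2=\a{1256}$. The back-blocked factorization is $(\a{1234},\a{1526},\a{1625})$, whose entries generate a subgroup of $A_6$ with orbits $\{1,2,5,6\}$ and $\{3,4\}$, while the front-blocked one is $(\a{1324},\a{1423},\a{1256})$, whose entries generate a subgroup with orbits $\{1,2,3,4\}$ and $\{5,6\}$. The subgroup generated by a tuple is invariant under the Hurwitz action, so no braid on those three strands connects them. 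Any correct repair must involve the remaining entries of the tuple --- transitivity and the global relation $\prod g_i=\id$, for instance through Lemma \ref{lemma:friedproductone}, or a normalization of the braid $Q$ so that its permutation fixes position $1$ --- and therefore cannot be reduced to a finite verification over configurations of $(c_1,c_2)$ alone. Until that step is supplied, your argument remains open precisely at the point you flagged as the main obstacle.
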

\begin{proof}
One can always assume that the $1$-reduction takes place between the first two elements of $\pmb{g}$ and $\pmb{h}$. Fix an external automorphism such that the reduction $\pmb{g} \rightarrow \bar{\pmb{g}}$ is the following
$$(\a{1324}, \a{1423}, g_3, \ldots, g_k)\rightarrow (\a{1234}, g_3, \ldots, g_k)$$
By hypothesis, there exists an Hurwitz action on $\bar{\pmb{h}}$ that makes
$$(h_1\cdot h_2, h_3, \ldots, h_k)$$
equal to 
$$(\a{1234}, g_3, \ldots, g_k)$$
This action can be extended to an action on $\pmb{h}$ just by considering the same action on $h_3 \ldots h_k$ and by conjugating always together the elements $h_1$ and $h_2$. This action makes $\pmb{h}$ equal to
$$(\bar{h}_1, \bar{h}_2, g_3, \ldots, g_k)$$
where $\bar{h}_1\bar{h}_2=\a{1234}$. And $\bar{h}_1$ and $\bar{h}_2$ are elements of a $1$-reduction. Up to the Hurwitz action, just on $\bar{h}_1$ and $\bar{h}_2$, it is possible to choose $\bar{h}_1=\a{1324}$ and $\bar{h}_2=\a{1423}$. This gives an equivalence between $\pmb{g}$ and $\pmb{h}$ and concludes the proof.
\end{proof}

Let us recall the base steps of the induction, which are carried out in Sections \ref{App:MonGenZero} and \ref{App:MonGenOne}.

\begin{prop} \label{Prop:FivePoints}
The monodromy arising from a degree $6$ covering of $\P^1$ ramified on $5$ points with ramification in the conjugacy class $\cc$ of $A_6$ generates three possible subgroups of $A_6$. One subgroup of order $24$, denoted by $G_{24}$, that corresponds to the unique, up to conjugation, transitive immersion of $S_4$ inside $A_6$. One of order $60$, denoted by $G_{60}$, that corresponds to the unique, up to conjugation, transitive immersion of $A_5$ inside $A_6$, and the whole $A_6$.
Each space $\H(A_6,\cc^5)^{abs}$, $\H([G_{24}],\cc^5)^{abs}$ and $\H([G_{60}],\cc^5)^{abs}$ is connected.
\end{prop}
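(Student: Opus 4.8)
The plan is to split the statement into the group-theoretic classification of the monodromy and the connectedness of the three Hurwitz spaces, treating them separately.

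For the classification, write $\pmb{g}=(g_1,\dots,g_5)$ with each $g_i\in\cc$ and $\prod g_i=\id$, generating the monodromy group $G$, which is transitive on the six sheets. Since every $g_i$ is even, $G\subseteq A_6$ automatically, and Riemann--Hurwitz (degree $6$, five branch points, each element of type $2^2$ contributing ramification $2$) gives $2g_X-2=-12+10=-2$, so the covering has genus $0$. The decisive remark is that $G$ must be generated by the elements of $\cc$ it contains. I would then run through the transitive subgroups of $A_6$: inspecting the transitive groups of degree six contained in $A_6$, up to conjugacy these are $A_4$, the transitive $S_4$ (the action on the six $2$-subsets of $\{1,2,3,4\}$), the maximal $F_{36}=3^2{:}4$, the transitive $A_5\cong PSL_2(5)$, and $A_6$; every other transitive subgroup of $S_6$ contains an odd permutation (a $6$-cycle, or a fixed-point-free involution of type $2^3$) and hence does not lie in $A_6$. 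For each candidate I compute the subgroup generated by its $\cc$-elements. For $A_4$ the only $\cc$-elements are the images of the Klein four-group $V$, so they generate $V$; for $F_{36}$ the involutions are exactly the nine elements inverting the normal $3^2$, and their pairwise products fill out $3^2$, so they generate the index-two subgroup $3^2{:}2$. In both cases the generated subgroup preserves the block system and is intransitive, so neither $A_4$ nor $F_{36}$ can be a monodromy group. This leaves exactly $S_4$, $A_5$ and $A_6$, each of which is generated by its involutions, all of type $\cc$; the uniqueness up to conjugacy of the transitive copies of $S_4$ and $A_5$ is the standard description of the two conjugacy classes of each inside $A_6$ (one intransitive, one transitive). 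I would close this part by exhibiting one explicit product-one generating quintuple in $\cc$ for each of the three groups, so that all three genuinely occur.

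For connectedness I would translate the claim into orbit counting. Since $N_{S_6}(A_6)=S_6$, the number of connected components of $\H(A_6,\cc^5)^{abs}$ is the number of orbits of the group generated by the Hurwitz action of $B_5$ and by $S_6$-conjugation on $\Ni(A_6,\cc^5)$, and likewise for $\H([G_{24}],\cc^5)^{abs}$ and $\H([G_{60}],\cc^5)^{abs}$ on the tuples generating a conjugate of $G_{24}$, respectively $G_{60}$. The goal is thus a single orbit in each of the three cases. To reduce the casework I would invoke Proposition~\ref{Prop:reductionfive}: up to the Hurwitz action any quintuple has two entries with equal fixed points, which after braiding I may assume to be $g_1,g_2$; by Proposition~\ref{Prop:FixedPointReduction} their product is then either $\id$ (a $2$-reduction to a product-one triple in $\cc$) or again an element of $\cc$ (a $1$-reduction to a product-one quadruple in $\cc$). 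The reduced tuples have far fewer Hurwitz orbits, which one can normalise and then lift back.

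The main obstacle is exactly this single-orbit verification, which is combinatorial rather than structural: the reduction of Proposition~\ref{Prop:reductionfive} branches, and in the $1$-reduction case one must still track how the different lifts of a reduced quadruple are linked by braids, so the reduction alone does not immediately collapse everything to one normal form. I would therefore finish with a finite computation --- enumerating the product-one quintuples in $\cc^5$, sorting them by the isomorphism type of the group they generate, and computing the orbits of $\langle B_5,S_6\rangle$ --- to certify that each of the three Nielsen classes is a single orbit; this is what the accompanying code is meant to perform. A built-in consistency check is that the order of the lifting invariant of Proposition~\ref{prop:InnerAbsolute} is constant on each orbit, so any discrepancy in the computed invariants would flag an error in the enumeration.
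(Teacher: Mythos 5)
Your proposal is correct in substance but reaches the result by a genuinely different route on both halves. For the classification of the monodromy group, the paper never argues structurally: it obtains the three groups $G_{24}$, $G_{60}$, $A_6$ as a byproduct of an explicit normal-form computation (Lemmas \ref{Lemma:SpecDependingFive} and \ref{Lemma:List} and Proposition \ref{prop:finalformelements5} reduce every tuple, up to the Hurwitz action and external automorphisms, to one of three representatives, whose orders are then read off). Your argument instead runs through the transitive subgroups of $A_6$ and uses the observation that the monodromy group must equal the subgroup generated by its own $\cc$-elements, eliminating $A_4$ and $3^2{:}4$ because their involutions generate the intransitive subgroups $V$ and $3^2{:}2$ respectively; this is a clean, verifiable computation and arguably more illuminating than the paper's, though it still needs the three explicit witness tuples (which the paper supplies as cases $[1]$, $[2]$, $[3.1]$). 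For connectedness, both you and the paper ultimately rely on a finite verification, but of different kinds: the paper hand-derives canonical representatives using Proposition \ref{Prop:reductionfive}, external automorphisms and a small enumeration (Code \ref{prog:list5points}), so that connectedness follows because every element is explicitly driven to one of the three forms; you correctly diagnose that the reduction lemmas alone do not collapse everything to a single normal form (the reduction branches, and lifting braid equivalences back from the reduced tuple is exactly the issue Proposition \ref{prop:liftinghurwitz} addresses for $k>5$), and you propose instead a direct orbit computation of $\langle B_5, S_6\rangle$ on all of $\Ni([G],\cc^5)$. That computation is finite and feasible ($|\cc|=45$, so roughly $45^4$ candidate tuples), and your reformulation of the component count as an orbit count is justified by Proposition \ref{prop:InnerAbsolute}(1); the lifting-invariant consistency check is a nice safeguard. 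The only caveat is that this decisive single-orbit verification is announced rather than carried out, so your argument is complete only modulo actually running that enumeration, whereas the paper's version of the same finite check is executed in the text and appendix.
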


\begin{prop} \label{Prop:SixPoints}
The space $\H(A_6,\cc^6)^{abs}$ has exactly two connected components, denoted by 
$$\H_+(A_6,\cc^6)^{abs} \text{ and } \H_-(A_6,\cc^6)^{abs}$$
\end{prop}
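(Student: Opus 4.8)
My plan is to separate the two components using the lifting invariant of Proposition~\ref{prop:InnerAbsolute} and then to show that nothing finer can distinguish coverings. Since $\pi^{-1}(\id_{A_6})=C_3$, the invariant $\gamma(\pmb{g})$ takes three values, but by part~(4) its \emph{order} — either $1$ or $3$ — is constant on connected components of $\H(A_6,\cc^6)^{abs}$. I would first produce one $\pmb{g}\in\Ni(A_6,\cc^6)$ with $\gamma(\pmb{g})=\id_V$ and one with $\gamma(\pmb{g})$ of order $3$; exhibiting two explicit $6$-tuples settles this and forces at least two components, which I label $\H_+$ (order $1$) and $\H_-$ (order $3$).

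The heart of the matter is the opposite bound: that the order of $\gamma$ determines the Hurwitz-plus-absolute orbit, so there are at most two components. I would argue by reducing to the five-point case. Given $\pmb{g}=(g_1,\dots,g_6)$, Proposition~\ref{Prop:reductionfive} allows me, after a Hurwitz move, to assume $g_1$ and $g_2$ have the same fixed points; Proposition~\ref{Prop:FixedPointReduction} then gives either $g_1=g_2$ or $g_1g_2\in\cc$, and I would arrange a $1$-reduction $\bar{\pmb{g}}=(g_1g_2,g_3,\dots,g_6)$ of length five. By Proposition~\ref{prop:liftingandreduction} this preserves $\gamma$, and by Proposition~\ref{prop:liftinghurwitz} (valid since $k=6>5$) a Hurwitz equivalence of two such reductions lifts back to one of the original $6$-tuples. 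So it suffices to connect the length-five reductions inside their Nielsen class.

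The decisive observation is that the order of $\gamma$ controls the \emph{monodromy group} of the reduction. The central extension $C_3\to V\to A_6$ restricts to a \emph{split} extension over each of $G_{24}$ and $G_{60}$, because the Schur multipliers of $S_4$ and $A_5$ carry no $3$-torsion; hence the unique order-$2$ lift of an element in such a subgroup lies in the complementary copy, and the lifts of any five-tuple landing in a proper subgroup multiply to $\id_V$. Consequently a reduction $\bar{\pmb{g}}$ of an element of $\H_-$ cannot generate a proper subgroup: it must generate all of $A_6$. Proposition~\ref{Prop:FivePoints} gives that $\H(A_6,\cc^5)^{abs}$ is connected, so any two elements of $\H_-$ reduce to Hurwitz-equivalent five-tuples and are therefore equivalent upstairs; thus $\H_-$ is a single component. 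Transitivity of the reduction here is automatic, which is exactly why $k=6$ must be treated as a base case rather than deduced from the connected $k=5$ space.

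The main obstacle I anticipate is the order-$1$ component $\H_+$. There the reductions have trivial lifting invariant and may generate $A_6$, $G_{60}$, or $G_{24}$; while Proposition~\ref{Prop:FivePoints} makes each of the three five-point spaces connected, Proposition~\ref{prop:liftinghurwitz} only matches reductions generating the \emph{same} group, so I must exclude the possibility of two elements of $\H_+$ whose reductions fall into incomparable subgroups. I would try to close this using the extra Hurwitz freedom present with six branch points, together with Proposition~\ref{Prop:threeGeneartors}, to steer every element of $\H_+$ to a reduction generating one fixed subgroup (say $G_{60}$), after which connectedness of $\H([G_{60}],\cc^5)^{abs}$ finishes. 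Since $k=6$ is small, the cleanest certification of both the existence of the two invariant values and the exact orbit count is a direct enumeration of the Hurwitz orbits on $\Ni(A_6,\cc^6)$, carried out in Section~\ref{App:MonGenOne}; the reduction analysis above is what explains \emph{why} the answer is two and pins the two components to the order of $\gamma$.
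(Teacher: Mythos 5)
Your overall route is the paper's: use the lifting invariant to force at least two components, then $1$-reduce to five branch points, invoke the connectedness results of Proposition~\ref{Prop:FivePoints}, and lift the equivalence back with Proposition~\ref{prop:liftinghurwitz}. Your Schur-multiplier observation --- that $0\to C_3\to V\to A_6\to 0$ splits over $G_{24}$ and $G_{60}$, so the canonical order-$2$ lifts form a section there and any five-tuple with monodromy in such a subgroup has trivial invariant --- is a genuinely nicer explanation than the paper offers of why the $A_6$- and $G_{60}$-type reductions carry different invariants. But two steps are not actually closed.

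First, your claim that ``transitivity of the reduction here is automatic'' is false, and this is exactly where most of Section~\ref{App:MonGenOne} is spent. Proposition~\ref{Prop:reductionfive} only produces two elements with equal fixed sets; their product may be the identity (a $2$-reduction, which for $k=6$ can never give a transitive element by Riemann--Hurwitz), and even when a $1$-reduction exists the resulting five-tuple may fail to be transitive. The paper needs two separate lemmas (Codes~\ref{prog:solve6pointscase2} and~\ref{prog:list6pointscase1}) to show that further Hurwitz moves always produce a transitive $1$-reduction. Your splitting argument does not substitute for this: it rules out $G_{24}$ and $G_{60}$ as monodromy of a reduction with invariant of order three, but Proposition~\ref{Prop:FivePoints} classifies only \emph{transitive} monodromies, so a priori the reduction could generate a non-transitive proper subgroup; to exclude that along your lines you would have to verify the splitting over every subgroup of $A_6$ generated by elements of $\cc$, which you do not do. Second, for the trivial-invariant component you correctly identify the obstacle --- reductions could land in $G_{24}$ or in $G_{60}$, both with trivial invariant, and Proposition~\ref{prop:liftinghurwitz} only matches Hurwitz-equivalent reductions --- but you leave it unresolved (``I would try to steer\dots''). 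The paper closes it with a specific lemma showing that a five-point element with monodromy of order $24$ never occurs as a $1$-reduction of a transitive six-point element, so only the $A_6$ and $G_{60}$ targets arise and the invariant separates them. Without that exclusion (or the brute-force orbit enumeration you fall back on, which is not in fact what Section~\ref{App:MonGenOne} does), the upper bound of two components is not established.
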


The following is the proof of the main theorem for the absolute moduli spaces.

\begin{proof}[Proof of Theorem \ref{teor:finalresult}]

Let us proceed by induction on the number of points $k$. Propositions \ref{Prop:FivePoints} and \ref{Prop:SixPoints} prove the thesis for $k=5$ and $k=6$, respectively. Assume the thesis holds for less than $k$ points and prove the claims for $k$.

It is easy to prove that all the considered components are not empty. This is done explicitly for $k=5$ and $6$ in the related sections. In the general case, take for instance the element 
$$(\a{1234}, g_2, \ldots, g_{k-1} ) \in \H_+(A_6,\cc^{k-1})^{abs}\text{.}$$
By Proposition \ref{prop:liftingandreduction}, the element 
$$(\a{1324},\a{1423}, g_2, \ldots, g_{k-1} )$$
 has the same lifting invariant, hence belongs to $\H_+(A_6,\cc^{k})^{abs}$. Proposition \ref{prop:InnerAbsolute} makes clear that $\H_+(A_6,\cc^{k})^{abs}$ and $\H_-(A_6,\cc^{k})^{abs}$ are two different irreducible components.

It remains to prove that these components are connected, namely if $\pmb{g}$ and $\pmb{g}'$ belong to the same component, that is they have the same lifting invariant, then they are equivalent under the Hurwitz and the absolute actions. The strategy is to use a $1$-reduction in order to obtain a monodromy on $k-1$ points, then use the induction hypothesis and then lift the equivalence using Proposition \ref{prop:liftinghurwitz}.

Let us work on the element $\pmb{g}$. By using Proposition \ref{Prop:threeGeneartors} one can assume the last three elements, that we will denote as $\{h_1, h_2, h_3\}$, to be generators of $A_6$. This ensures that the reductions will have maximal monodromy. Let us now focus on the remaining $k-3$ elements. If $k > 7$, there are enough elements to apply Proposition \ref{Prop:reductionfive} and get a reduction. For $k=7$, the only possibility is to apply Proposition \ref{Prop:reductionfour} to the remaining $4$ elements. One gets that either there still are two elements with the same fixed points, or the element is in the form
$$(\a{1234}, \a{1235}, \a{1634}, \a{1645},h_1, h_2, h_3)$$
Notice that the product of the first four elements is the identity, and then also the product of the last three elements must be the identity, and this is not compatible with the last three elements generating $A_6$. Then a reduction is possible even for the case $k=7$.

Assume that we are facing $2$-reduction, if $k>5$ the element is then in the form
$$\pmb{g}:=(x,x,g_1,\ldots,g_{k-5},h_1, h_2, h_3)$$
In this case, the product $g_1 \ldots h_3$ is the identity, since the $h_i$ are generators there exists an element $\gamma$ in $\langle g_1,\ldots,g_{k-5},h_1, h_2, h_3 \rangle$ such that $x$ and $\gamma g_1 \gamma^{-1}$ makes a $1$-reduction. By Lemma \ref{lemma:friedproductone}, there exists an element $Q \in B_r$ such that
$$Q(\pmb{g}):=(x,x,\gamma \cdot g_1 \cdot \gamma^{-1},\ldots,\gamma \cdot g_{k-5} \cdot \gamma^{-1},\gamma \cdot h_1\cdot  \gamma^{-1}, \gamma \cdot h_2 \cdot \gamma^{-1}, \gamma \cdot  h_3 \cdot \gamma^{-1})$$
The last three elements are still generators and the pair $(x,\gamma \cdot g_1 \cdot \gamma^{-1})$ shows that it always exists a $1$-reduction.

To conclude the proof, we just showed that the two elements $\pmb{g}$ and $\pmb{g}'$ admit a $1$-reduction to $\bar{\pmb{g}}$ and $\bar{\pmb{g}}'$. By Proposition \ref{prop:liftingandreduction}, $\bar{\pmb{g}}$ and $\bar{\pmb{g}}'$ belong to the same component of $\H(A_6,\cc^{k-1})^{abs}$ and then, by the induction hypothesis, they are equivalent by the Hurwitz action. Proposition \ref{prop:liftinghurwitz} ensures that this action can be lifted to obtain an equivalence also between $\pmb{g}$ and $\pmb{g}'$.
\end{proof}

\section{Curve of genus zero, case of five points} \label{App:MonGenZero}
This section aims to classify the elements of $Ni([G],\cc^5)$ with $G$ being a subgroup of $A_6$. 
We will use the notation $\a{xy--}$ to underline that  we are focusing on a specific part of the permutation and let the other part vary. For example, $\a{1---}$ denotes all the elements with $1$ in the first place. Recall that the external action of an element $s$ of $S_n$ is denoted by $\phi_s$.
There is a natural notion of lexicographic order on $S_n$ that we will use in the calculation. The following lemma ensures that one can always restrict to work with ordered elements.

\begin{lemma} \label{Lemma:ordering}
Up to the Hurwitz action, every element of $Ni([G],\cc^k)$ is equivalent to an ordered one.
\end{lemma}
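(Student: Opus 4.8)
The plan is to realise the lexicographic sorting by a selection-sort procedure built entirely from the Hurwitz generators $\sigma_i$ of Definition \ref{defn:HAC}. The crucial observation is that a single generator moves the entry in position $i+1$ one step to the left \emph{without altering it}: since
$$\sigma_i(\ldots, g_i, g_{i+1}, \ldots) = (\ldots, g_{i+1}, g_{i+1}^{-1} g_i g_{i+1}, \ldots),$$
the entry $g_{i+1}$ lands in position $i$ verbatim, while the former entry $g_i$ is only conjugated and hence stays in $\cc$. Consequently, to bring any chosen entry $g_j$ to the first position unchanged, I would apply $\sigma_{j-1}, \sigma_{j-2}, \ldots, \sigma_1$ in succession; each application transports $g_j$ from its current position $i+1$ to position $i$ exactly, while the remaining entries get reshuffled and conjugated but all remain in the conjugacy class $\cc$.

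With this tool the argument becomes a standard selection sort. First I would locate a lexicographically minimal entry among $g_1, \ldots, g_k$ and drag it to position $1$ by the chain of generators above, so the first slot holds a smallest element. Next, using only the generators $\sigma_2, \ldots, \sigma_{k-1}$, which fix position $1$ and act on positions $2, \ldots, k$, I would repeat the procedure to place a minimal element of the remaining block into position $2$. Iterating, after at most $k-1$ passes the tuple becomes weakly increasing, i.e.\ ordered; repeated values are harmless, since several $g_i$ may coincide and the non-strict sort accommodates them.

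Throughout, the Hurwitz action preserves membership in $Ni([G], \cc^k)$: the product of the entries stays equal to the identity, the generated subgroup is unchanged, and every entry remains in $\cc$. Hence the ordered tuple produced at the end is genuinely Hurwitz-equivalent to the original one inside the Nielsen class, which is exactly the assertion of the lemma.

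The only point requiring care --- and the main, though mild, obstacle --- is the bookkeeping that guarantees the leftward transport leaves the moving entry \emph{literally} unchanged rather than merely conjugate to it. This is precisely what distinguishes $\sigma_i$, which fixes the incoming entry $g_{i+1}$, from $\sigma_i^{-1}$, which would conjugate it; choosing the correct generator at each step is what ensures the final tuple is honestly sorted, and not just sorted up to conjugacy.
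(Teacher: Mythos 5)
There is a genuine gap in the selection-sort argument: the invariant that makes selection sort finish in $k-1$ passes fails here. When you drag a minimal entry $m$ leftward, every entry it passes over is replaced by its conjugate $m^{-1} g_i m$, and such a conjugate can be lexicographically \emph{smaller} than $m$ itself. Concretely, suppose $g_1 = (1,2)(4,5)$, $g_2 = (1,2)(3,5)$, and $g_2$ is the minimum of the tuple; after the single application of $\sigma_1$ that brings $g_2$ to the front, the tuple begins with
$$\bigl((1,2)(3,5),\ (1,2)(3,5)^{-1}(1,2)(4,5)(1,2)(3,5)\bigr) \;=\; \bigl((1,2)(3,5),\ (1,2)(3,4)\bigr),$$
so position $2$ now holds an element strictly smaller than the one you just installed in position $1$, and no subsequent pass confined to positions $2,\ldots,k$ can repair this. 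Hence ``after at most $k-1$ passes the tuple becomes weakly increasing'' is false as stated; the real obstacle is not, as you suggest, the bookkeeping of the moving entry, but the fact that the entries it moves past are altered.

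The paper's proof avoids this with a bubble-sort step: whenever $g_i > g_{i+1}$, apply $\sigma_i$ to replace the pair by $(g_{i+1},\, g_{i+1}^{-1} g_i g_{i+1})$. Positions $1,\ldots,i-1$ are untouched and the entry in position $i$ strictly decreases, so the whole tuple strictly decreases for the lexicographic order on $k$-tuples; since $Ni([G],\cc^k)$ is finite, the process terminates, necessarily at a tuple with no inversion, i.e.\ an ordered one. Your argument could be repaired along the same lines --- iterate your passes and exhibit such a strictly decreasing quantity --- but as written the termination claim does not hold.
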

\begin{proof}
Let $\pmb{g}:=\{g_1, \ldots, g_k\}$ be in $Ni([G],\cc^k)$, if it is not ordered then there is $g_i > g_{i+1}$. Then one can use the Hurwitz action to obtain a new element in which the couple $g_{i+1}, g_{i+1}^{-1} g_i g_{i+1}$ appears. This procedure can be repeated until an ordered element is reached. The process must end due to the fact that the whole number of elements in $Ni([G],\cc^k)$ is bounded.
\end{proof}

Notice that this procedure does not necessarily provide a minimal element of $Ni([G],\cc^k)$. The following element $X$ is ordered but the Hurwitz action on the first two elements produces $Y$ that is still ordered and $Y < X$.
$$X:= \a{1235}, \a{1245}, \a{1326}, \a{1345}, \a{2635}\text{,}$$
$$Y:= \a{1234}, \a{1235}, \a{1326}, \a{1345}, \a{2635}\text{.}$$

The action of the external automorphism can also change the ordering. The following element $X$ is ordered but, by applying $\phi_{\b{56}}$ one finds an element $Y$ that is still ordered and $Y<X$.
$$X:= (\a{1234} , \a{1236} , \a{1345} , \a{1435}, \a{1546})\text{,}$$
$$Y:= (\a{1234} , \a{1235} , \a{1346} , \a{1436}, \a{1645})\text{.}$$


Up to the action of an external automorphism, one can assume the first element to be $\a{1234}$. This assumption reduces the external automorphism that one can use further to be $\phi_{\b{12}}$, $\phi_{\b{34}}$, $\phi_{\b{56}}$ and $\phi_{\a{1324}}$. 
Let now choose an element $\pmb{g}$ in $Ni([G],\cc^5)$. 
The following lemmas apply the Hurwitz action and the external automorphisms in order to find the different classes of these equivalence relations in $Ni([G],\cc^5)$, and then, the different connected components of $\H([G],\cc^5)^{abs}$. Without writing it explicitly, every assumption will be made up to the Hurwitz action and external automorphisms.

\begin{lemma} \label{Lemma:SpecDependingFive}
The element $\pmb{g}=(g_1, \ldots, g_5)$ falls in one of the following cases
\begin{enumerate}
	\item[$\operatorname{[1]}$] $(\a{1234}, \a{1234}, \a{1325}, \a{1346}, \a{2546})$,
	\item[$\operatorname{[2]}$] $(\a{1234}, \a{1324}, \a{1423}, \a{1526}, \a{1526})$,
	\item[$\operatorname{[3]}$] $(\a{1234}, \a{1324}, \a{1425}, \ldots)$,
	\item[$\operatorname{[4]}$] $(\a{1234}, \a{1324}, \a{1456}, \ldots)$,
	\item[$\operatorname{[5]}$] $(\a{1234}, \a{1324}, \a{15--}, \ldots)$.
\end{enumerate}
\end{lemma}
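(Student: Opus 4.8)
The plan is to reduce an arbitrary $\pmb{g}=(g_1,\dots,g_5)\in\Ni([G],\cc^5)$ to a normal form for its first three entries, exploiting the three tools already set up: the ordering of Lemma~\ref{Lemma:ordering}, the residual external automorphisms, and the Hurwitz action on the whole $5$-tuple. First I would invoke Lemma~\ref{Lemma:ordering} to assume $\pmb{g}$ is ordered, and then apply an external automorphism to put $g_1=\a{1234}$; as recorded above, the automorphisms that still fix this choice are exactly $\phi_{\b{12}},\phi_{\b{34}},\phi_{\b{56}}$ and $\phi_{\a{1324}}$, which generate the stabilizer $H$ of $g_1$ (a group of order $16$). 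All subsequent normalizations are made up to $H$ and up to the Hurwitz action, and the goal is to pin the triple $(g_1,g_2,g_3)$ to one of five shapes.

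The core of the argument is the reduction of $g_2$ to one of the two values $\a{1234}$ (giving case~[1]) or $\a{1324}$ (giving cases~[2]--[5]). I would classify $g_2$ by the size of the intersection of its support with the support $\{1,2,3,4\}$ of $g_1$, which is necessarily $4$, $3$ or $2$. When the intersection is $4$ the two elements have the same support and $H$ has the two orbits $\{g_1\}$ and $\{\a{1324},\a{1423}\}$, giving the representatives $\a{1234}$ and $\a{1324}$ at once. When the intersection is $3$ or $2$ (the latter including the case where $g_1$ and $g_2$ share a transposition, such as $\a{1256}$), the group $H$ still acts with only a few orbits, whose representatives are written down directly; the delicate point is that these cannot be collapsed onto $\a{1234}$ or $\a{1324}$ by acting on the pair $(g_1,g_2)$ alone, since a short check shows that a local Hurwitz move followed by an element of $H$ returns the very same pair. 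The reduction must therefore use the remaining entries, exploiting the transitivity of $\langle g_1,\dots,g_5\rangle$ to produce a conjugating element that a braid $\sigma_i$ then carries through the tuple. This global collapse is the main obstacle, and it is cleanest to certify by the same systematic, computer-assisted enumeration used elsewhere in the paper, checking that every $H$-orbit of $g_2$ with intersection $3$ or $2$ is Hurwitz-equivalent to the case $g_2=\a{1324}$.

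Once $g_2$ is normalized I would finish by classifying $g_3$ up to the residual stabilizer of the pair $(g_1,g_2)$. If $g_2=\a{1234}=g_1$ then $g_1g_2=\id$, so $g_3g_4g_5=\id$; three involutions with trivial product are rigid, and normalizing them up to $H$ pins the tuple to the single form of case~[1]. If $g_2=\a{1324}$, then $\langle g_1,g_2\rangle$ is the Klein group on $\{1,2,3,4\}$, and $g_3$ falls into four orbits under the stabilizer of the pair: same support $\{1,2,3,4\}$, forcing $g_3=\a{1423}$, so that $g_1g_2g_3=\id$ and, exactly as in case~[1], the tail is rigid, yielding case~[2]; support meeting $\{1,2,3,4\}$ in three points with the new point $5$ carried by a cross-transposition, normalized to $\a{1425}$ (case~[3]); support containing the whole pair $\{5,6\}$ as a transposition, normalized to $\a{1456}$ (case~[4]); and the remaining possibility in which $(1,5)$ is a transposition of $g_3$, i.e. $g_3=\a{15--}$ (case~[5]). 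Collecting these gives precisely the five listed cases. I expect the only genuine difficulty to be the global $g_2$-collapse of the preceding paragraph; the $g_1$- and $g_3$-normalizations are routine orbit computations under $H$, and the rigidity invoked for cases~[1] and [2] follows from the product-one relation together with the generation hypothesis.
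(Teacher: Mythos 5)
Your overall route coincides with the paper's: normalize $g_1$, force $g_2$ onto the support of $g_1$ by a computer\--assisted Hurwitz argument (this step is exactly Proposition~\ref{Prop:reductionfive}, already proved, which you could cite instead of re\--running the enumeration), then classify $g_3$ up to the residual stabilizer. The genuine gap is in your branch $g_1=g_2=\a{1234}$. You assert that three involutions of $\cc$ with trivial product are ``rigid'' and that normalizing them under the order-$16$ stabilizer $H$ pins the tuple to case~[1]. That is false: such a triple is the set of involutions of a Klein four\--group contained in $\cc$, and there are two $S_6$\--conjugacy types of these, namely $\{(a,b)(c,d),(a,c)(b,d),(a,d)(b,c)\}$ with a common $4$\--point support, and $\{(a,b)(c,d),(a,b)(e,f),(c,d)(e,f)\}$ built from three disjoint transpositions. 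Only the second type is conjugate to the tail of case~[1]. The first type does occur in valid tuples, for instance
$$\bigl(\a{1234},\,\a{1234},\,\a{1356},\,\a{1536},\,\a{1635}\bigr)\text{,}$$
which is ordered, has product one and generates a transitive subgroup, yet is not conjugate to case~[1] by any element of $S_6$ (indeed $\a{1234}\cdot\a{1356}$ has order $4$, so the generated group is not $G_{60}$, while case~[1] generates $G_{60}$). No normalization ``up to $H$'' can reach case~[1] here. The paper's proof handles precisely this situation by observing that $g_3$ and $g_4$ then have the same fixed points and using a further Hurwitz move to push the tuple into the $g_2=\a{1324}$ branch, i.e.\ into cases [2]--[5]; that extra idea is what your argument is missing.

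A smaller issue of the same nature affects your classification of $g_3$ when $g_2=\a{1324}$. Your four ``orbits'' are not orbits of the stabilizer of the pair $(g_1,g_2)$: for example $\a{1325}$ and $\a{1425}$ both have support meeting $\{1,2,3,4\}$ in three points with the new point $5$ on a cross\--transposition, but they are not equivalent under that stabilizer. The paper must, and does, argue separately from the product\--one relation, transitivity and the ordering of Lemma~\ref{Lemma:ordering} that $g_3$ contains the point $1$ and cannot contain the transposition $(1,2)$ or $(1,3)$, before the surviving possibilities collapse onto $\a{1423}$, $\a{1425}$, $\a{1456}$, $\a{15--}$; likewise ``same support forces $g_3=\a{1423}$'' overlooks $g_3\in\{\a{1234},\a{1324}\}$. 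These points are fixable, but they require arguments rather than orbit counts.
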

\begin{proof}
Assume $g_1$ to be $\a{1234}$. By Proposition \ref{Prop:reductionfive}, the second element can be either $\a{1234}$, carried on in Part $1$ of the proof, or $\a{1324}$, carried on in Part $2$.

\textbf{Part 1.} The product of the last three components of $\pmb{g}$ must be the identity. For transitivity, there must be at least another $1$ or $2$; up to $\phi_{\b{12}}$, the third element has the form $\a{1abc}$. This gives rise only to three possibilities for the fourth elements. If $g_4$ is $\a{1bac}$ or $\a{1cab}$, we can reduce to Part $2$ of the proof. If $g_4$ is $\a{1axy}$ with $\{x,y\} \neq \{b,c\}$, the element is in the form
$$\pmb{g} = (\a{1234}, \a{1234}, \a{1abc}, \a{1axy}, \a{bcxy})\text{.}$$
The case $a=2$ can not occur: $\pmb{g}$ has to be transitive, then again one need $1$ or $2$ in the last element, but this gives rise to a contradiction. 
If $a=4$ the external automorphism $\phi_{\b{34}}$ allows to reduce to $a=3$, and similarly $a=6$ reduces to $a=5$. It remains to prove that the case $i=5$ reduces to $i=3$. So let now consider
$$\pmb{g} = (\a{1234}, \a{1234}, \a{15bc}, \a{15xy}, \a{bcxy})\text{.}$$
It has to be transitive, then $3$ or $4$ must appear more than one time; up to $\phi_{\b{34}}$, assume $b=3$. The transitivity shows also that $c$ must be different from $4$. The two possibilities for $c$ are only $2$ and $6$. If $c=6$ then $\{x,y\}$ must be equal to $\{2,4\}$. By the Hurwitz action and $\phi_{\b{34}}$ one can reduce to the case $c=2$. This would give the final form 
$$\pmb{g} = (\a{1234}, \a{1234}, \a{1532}, \a{1546}, \a{3246})\text{.}$$
But then, the automorphism $\phi_{\a{1324}}$ gives an equivalence with an element in the case $a=3$. 
$$\pmb{g} = (\a{1234}, \a{1234}, \a{13bc}, \a{13xy}, \a{bcxy})\text{.}$$
Since $\{b,c,x,y\}$ is $\{2,4,5,6\}$, one can assume $b=2$. For transitivity it follows that $c$ is either $5$ or $6$, but then up to $\phi_{\b{56}}$ one gets the final claim for Case (1):
$$\pmb{g} = (\a{1234}, \a{1234}, \a{1325}, \a{1346}, \a{2546})\text{.}$$

\textbf{Part 2.} 
Like before, there must be at least another $1$. Let us first prove that $g_3$ is not in the form $\a{13--}$. In such a case, it would follow that it has to be at least another $1$. So in case there are exactly four $1$, $\pmb{g}$ would be
$$\pmb{g} = (\a{1234}, \a{1324}, \a{13--}, \a{1---}, \a{----})\text{,}$$
but the product being the identity gives easily a contradiction. If follows that the $1$ must be five in total:
$$\pmb{g} = (\a{1234}, \a{1324}, \a{13--}, \a{1---}, \a{1---})\text{.}$$
By some calculations observing that the composition of the last three elements has to be $\a{1423}$, it follows that also this case can not occur.
%
%
%
Due to the external automorphism $\phi_{\b{23}}$ also the case $g_3=\a{12--}$ is not possible. Using external automorphisms it is easy to show that one can always reduce to have $g_3$ equal to $\a{1423}$, $\a{1425}$, $\a{1456}$ or $\a{15--}$, this gives rise to Cases 2, 3, 4, 5.

\textbf{Final form of Case 2.} Let us specify the form of an element in the case $[2]$. Such an element must be in the form
$$\left(\a{1234}, \a{1324}, \a{1423}, \a{abcd}, \a{abcd}\right)$$
And the external automorphisms that can act on it are all the permutation of $\{1,2,3,4\}$ and $\phi_{\b{56}}$. We can then assume $a=1$ and $b=5$ in order to keep the element transitive, it follows than $d$ must be equal to $6$ and, up to external automorphisms one can choose $c$ to be $2$. 
\end{proof}

\begin{lemma} \label{Lemma:List}
The only possible elements that generates the monodromy group, following the simplifications of the previous lemmas are
\begin{enumerate}
\item[$\operatorname{[1]}$] $(\a{1234},\a{1234},\a{1325},\a{1346},\a{2546})$,
\item[$\operatorname{[2]}$] $(\a{1234},\a{1324},\a{1423},\a{1526},\a{1526})$,
\item[$\operatorname{[3.1]}$] $(\a{1234},\a{1324},\a{1425},\a{1623},\a{1635})$,
\item[$\operatorname{[3.2]}$] $(\a{1234},\a{1324},\a{1425},\a{2346},\a{3546})$,
\item[$\operatorname{[4]}$] $(\a{1234},\a{1324},\a{1456},\a{2536},\a{2635})$,
\item[$\operatorname{[5]}$] $(\a{1234},\a{1324},\a{1546},\a{1645},\a{2356})$.
\end{enumerate}
\end{lemma}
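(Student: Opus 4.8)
The plan is to convert the determination of the undetermined tail entries in Lemma \ref{Lemma:SpecDependingFive} into a factorisation problem and to solve it case by case. In each of the five cases the prefix $g_1,g_2,g_3$ is already fixed (in case [5] up to the second transposition of $g_3$), so the Nielsen condition $\prod_i g_i=\id$ forces
$$g_4 g_5 = (g_1 g_2 g_3)^{-1},$$
a completely determined element of $A_6$ once the prefix is known. Because every element of $\cc$ is an involution, one has $g_5=g_4(g_1 g_2 g_3)^{-1}$, so the admissible tails are obtained by letting $g_4$ range over the $45$ elements of $\cc$ and keeping those for which $g_4(g_1 g_2 g_3)^{-1}$ again lies in $\cc$. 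This is a finite check, which I would carry out after computing the prefix product in each case.

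Cases [1] and [2] require no new work: Lemma \ref{Lemma:SpecDependingFive} already determines all five entries, so it suffices to record that the two displayed tuples lie in $\cc^5$, multiply to $\id$, and act transitively, hence generate one of the admissible monodromy groups of Proposition \ref{Prop:FivePoints}. For case [3] I would compute that $g_1 g_2 g_3$ is a $3$-cycle supported on $\{2,3,5\}$, and for case [4] that $g_1 g_2 g_3=\a{2356}$ is a double transposition; the factorisation search then returns the tails occurring in [3.1], [3.2] and [4]. In case [5] the second transposition of $g_3=\a{15--}$ is still free, so here I would let it range over the admissible pairs, so that $g_1 g_2 g_3$ acquires cycle type $4\cdot 2$, and run the same search, retaining only the transitive completions (which single out $g_3=\a{1546}$).

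The final ingredient is to collapse the surviving completions to the claimed normal forms. Here I would use the residual symmetry: the conjugations fixing both $g_1=\a{1234}$ and $g_2=\a{1324}$, generated by $\phi_{\a{1324}}$, $\phi_{\b{56}}$ and $\phi_{\a{1234}}$ and forming a group of order $8$, together with the Hurwitz action on the last pair, which replaces $(g_4,g_5)$ by $(g_5,\,g_5^{-1}g_4 g_5)$. Applying these moves identifies the various factorisations produced by the search with one of the six listed representatives and discards the intransitive ones; for instance in case [3] the three raw choices of the common outer transposition on $\{1,4,6\}$ include $(1,4)$, which leaves the point $6$ fixed and is therefore rejected by transitivity.

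I expect the main obstacle to be the bookkeeping rather than any single hard step: one must check that the factorisation search is exhaustive, that the residual automorphism group has been computed correctly so that distinct normal forms are neither wrongly merged nor wrongly split, and that transitivity is imposed throughout. The most delicate point is case [3], where two genuinely inequivalent completions [3.1] and [3.2] survive; both have $g_4,g_5$ sharing a common outer transposition $u$ that cancels in $g_4 g_5$, with $u=(1,6)$ in [3.1] and $u=(4,6)$ in [3.2], and the remaining task is to verify that no element of the order-$8$ residual group, combined with the Hurwitz action on the tail, carries one into the other.
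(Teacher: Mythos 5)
Your overall strategy --- fix the prefix supplied by Lemma \ref{Lemma:SpecDependingFive}, enumerate the tails by solving $g_4g_5=(g_1g_2g_3)^{-1}$ with $g_5=g_4(g_1g_2g_3)^{-1}$ in $\cc\times\cc$, impose transitivity, and collapse by the Hurwitz move on the last pair together with the order-$8$ centralizer of $\{\a{1234},\a{1324}\}$ --- is the same finite search that the paper delegates to Code \ref{prog:list5points}, and your treatment of cases $[1]$--$[4]$, including the analysis of the common cancelling transposition in case $[3]$, is sound. The step that fails is case $[5]$. It is not true that $g_1g_2g_3$ has cycle type $4\cdot 2$ for every admissible $g_3=\a{15--}$, nor that transitivity singles out $g_3=\a{1546}$. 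Since $g_1g_2=\a{1423}$, a direct computation gives type $4\cdot2$ only for $g_3=\a{1546}$; for $g_3=\a{1523}$ the product is a $3$-cycle, for $\a{1524}$ and $\a{1534}$ a $5$-cycle, and for $\a{1526}$ and $\a{1536}$ a product of two disjoint $3$-cycles. The $5$-cycle prefixes are indeed killed by transitivity (every factorization of a $5$-cycle into two involutions of $\cc$ has both factors fixing the same extra point), but the others are not. For instance
$$(\a{1234},\,\a{1324},\,\a{1526},\,\a{1423},\,\a{3645})
\qquad\text{and}\qquad
(\a{1234},\,\a{1324},\,\a{1523},\,\a{1426},\,\a{2645})$$
are tuples of elements of $\cc$ with product the identity generating a transitive subgroup (the first generates the copy of $S_4$ preserving the blocks $\{1,2\},\{3,4\},\{5,6\}$; the second contains the $5$-cycle $g_1g_3$ and so has order divisible by $30$). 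Your filter retains them, yet neither has $g_3=\a{1546}$.

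This is a genuine gap rather than a bookkeeping slip, because the reduction moves you allow afterwards --- the braid generator acting on $(g_4,g_5)$ and conjugation by the centralizer of $\{\a{1234},\a{1324}\}$ --- never alter $g_3$ beyond permuting $\{1,2,3,4\}$ and swapping $5$ with $6$, and in particular cannot turn $\a{1526}$ or $\a{1523}$ into $\a{1546}$. So the surviving extra completions cannot be merged into item $[5]$ by the moves you permit; identifying them with items $[2]$ and $[1]$ requires Hurwitz moves involving the third position and external automorphisms that move $g_1$ and $g_2$, which is exactly the kind of manipulation the paper only deploys later, in Proposition \ref{prop:finalformelements5}. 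The damage is local --- these stray candidates generate $G_{24}$ and $G_{60}$ and so do not change the final classification --- but as written your case-$[5]$ argument asserts a false cycle-type computation and draws a false conclusion from transitivity, so the enumeration would not terminate in the six displayed representatives. You need either to enlarge the set of allowed reductions at this stage and redo the collapse, or to weaken the claim to ``up to the full equivalence''.
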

\begin{proof}
Code \ref{prog:list5points} lists all the possible ordered elements, belonging to cases $[3]$, $[4]$ and $[5]$ of Lemma \ref{Lemma:SpecDependingFive}. By the Hurwitz action on $g_4$ and $g_5$, the list shrinks to the one presented in the lemma.
\end{proof}

\begin{prop} \label{prop:finalformelements5}
Up to conjugation and external automorphisms every element of $Ni([G],\cc^5)$ with $G$ a subgroup of $A_6$ can be reduced to one of the following cases
\begin{enumerate}
\item[$\operatorname{[1]}$] $(\a{1234},\a{1234},\a{1325},\a{1346},\a{2546})$,
\item[$\operatorname{[2]}$] $(\a{1234},\a{1324},\a{1423},\a{1526},\a{1526})$,
\item[$\operatorname{[3.1]}$] $(\a{1234},\a{1324},\a{1425},\a{1623},\a{1635})$.
\end{enumerate}
and these object are not connected by the action of the braid group because they generate groups of order $60$, $24$ and $360$ respectively. 
\end{prop}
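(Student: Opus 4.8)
The plan is to separate the two assertions of the statement: first that the three listed elements lie in three distinct braid orbits, and then that every element of $\Ni([G],\cc^5)$ reduces to one of them.

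For the distinctness, I would use the subgroup generated by the tuple as an invariant. The Hurwitz move of Definition \ref{defn:HAC} replaces the pair $(g_i,g_{i+1})$ by $(g_{i+1},g_{i+1}^{-1}g_ig_{i+1})$, and since each new entry lies in $\langle g_i,g_{i+1}\rangle$ and conversely, the subgroup $\langle g_1,\dots,g_5\rangle$ is unchanged; inner conjugation and the external automorphisms $\phi_s$ only replace this subgroup by an isomorphic (indeed conjugate) one. Hence the order of $\langle g_1,\dots,g_5\rangle$ is invariant under all the equivalences in play. A direct computation gives orders $60$, $24$ and $360$ for the elements $[1]$, $[2]$ and $[3.1]$, identifying the groups with $G_{60}\cong A_5$, $G_{24}\cong S_4$ and $A_6$ of Proposition \ref{Prop:FivePoints}; since these three integers are distinct, the three elements cannot be connected by the braid group.

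For the reduction, I would start from Lemma \ref{Lemma:List}, which already brings every element, up to the Hurwitz action and external automorphisms, to one of the six tuples $[1],[2],[3.1],[3.2],[4],[5]$. It then remains to fold $[3.2]$, $[4]$ and $[5]$ into the three survivors. First I would compute the order of the subgroup generated by each of these three tuples; each value is again one of $60$, $24$, $360$, so by the invariance above each of $[3.2],[4],[5]$ can only be equivalent to the representative sharing its generated group. To upgrade this necessary condition to an actual equivalence I would then exhibit, for each of the three extra tuples, an explicit finite sequence of Hurwitz moves (followed if needed by a single external automorphism $\phi_s$) carrying it to $[1]$, $[2]$ or $[3.1]$; since the Hurwitz orbit is finite this is a terminating search, and it is exactly what the accompanying Magma computation is designed to certify.

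The main obstacle is this last step: the order of the generated group proves that there are at least three orbits and names a representative in each, but it does not by itself exclude a further splitting within a fixed group. One really has to verify that each of $G_{60}$, $G_{24}$ and $A_6$ supports a single braid orbit on $\Ni([G],\cc^5)$. Here there is no shortcut through Serre's genus-zero connectedness theorem, because that result concerns conjugacy classes of odd-order cycles whereas $\cc$ consists of involutions; so the collapse of $[3.2],[4],[5]$ must be obtained by the explicit orbit computation. The delicate bookkeeping is to ensure that this enumeration, together with the external-automorphism normalizations already used in Lemmas \ref{Lemma:SpecDependingFive} and \ref{Lemma:List}, genuinely exhausts the cases, so that no fourth orbit is overlooked.
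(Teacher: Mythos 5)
Your proposal is correct and follows essentially the same route as the paper: start from Lemma \ref{Lemma:List}, use the order of the generated subgroup (invariant under Hurwitz moves and conjugation) to separate the three orbits, and fold $[3.2]$, $[4]$, $[5]$ into the survivors by explicit equivalences. The only cosmetic difference is that the paper writes these equivalences down directly (a single Hurwitz move on the indicated pair for $[4]$ and $[5]$, and the external automorphism $\phi_{(1,4)}$ for $[3.2]$) rather than delegating them to a computer search.
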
 
\begin{proof}
By Lemma \ref{Lemma:List} it remains to prove that $[3.2]$, $[4]$ and $[5]$ can be reduced to one of the three last cases. Cases $[4]$ and $[5]$ are equivalent to Case $[2]$, this can achieved by conjugating $\a{1456},\a{2536}$ in $[4]$ and $\a{1645},\a{2356}$ in $[5]$. Eventually, $[3.2]$ is equivalent to $[3.1]$ by using $\phi_{\b_{14}}$.
\end{proof}

As an immediate consequence of this proposition one obtains the following

\begin{proof}[Proof of Proposition \ref{Prop:FivePoints}]
Each space $\H(A_6,\cc^5)^{abs}$, $\H([G_{24}],\cc^5)^{abs}$ and $\H([G_{60}],\cc^5)^{abs}$ is not empty, thanks to the elements of Cases $[3.1]$, $[2]$ and $[1]$, respectively. The constructions in the lemmas describe explicitly an equivalence between two elements of the same space, giving the connectedness.
\end{proof}

\begin{oss}
In the classification carried on this section we used the external action of the whole group $S_6$, and so we considered only the conjugacy classes of the monodromy groups $[G_{60}]$ and $[G_{24}]$. This is sufficient for the aim of proving Theorems \ref{teor:finalresult} and \ref{teor:finalresultinner}. However, the spaces $\H(G_{60},\cc^5)^{abs}$ and $\H(G_{24},\cc^5)^{abs}$ are also connected for every choice of $G_{60}$ and $G_{24}$ in their conjugacy class. The strategy of proving that consists of proving the connectedness for a particular choice of $G_{24}$ or $G_{60}$, and then using Proposition \ref{prop:InnerAbsolute} to extend the result to all the other cases by conjugation.
\end{oss}

\section{Curve of genus one, case of six points}  \label{App:MonGenOne}
This appendix is devoted to prove Proposition \ref{Prop:FivePoints}. The idea is to exploit Proposition \ref{Prop:reductionfive} taking into account the issues that makes this case different from the induction step.

\begin{ese}
The space $\H(A_6,\cc^6)^{abs}$ has at least two connected components. To see this it is sufficient to compute the lifting invariant of these two elements
$$\a{1234}, \a{1234}, \a{1236}, \a{1236}, \a{1325}, \a{1325}$$
$$\a{1234}, \a{1234}, \a{1236}, \a{1256}, \a{1435}, \a{1456}$$
\end{ese}

One can try to reduce the problem to five points, but the situation here is not easy as for $8$ and more points, since there are not enough elements to use both Propositions \ref{Prop:reductionfive} and \ref{Prop:threeGeneartors}; this is the reason for which one need to treat this case separately and not as a part of the induction step of Theorem \ref{teor:finalresult}.
The strategy is still to use Proposition \ref{Prop:reductionfive} in order to reduce the number of points from $6$ to $5$ and then get rid of the problems. 
\begin{oss} \label{rmk:situations}
Assume that Proposition \ref{Prop:reductionfive} is used on the first five elements of $\pmb{g}$ in $Ni(A_6,\cc^6)$. The following situations can arise:
\begin{enumerate}
	\item A $1$-reduction is possible, and the resulting element is a valid monodromy on $5$ points.
	\item A $1$-reduction is possible, but the transitivity is lost after the reduction.
	\item A $2$-reduction is possible, but in this case, the resulting element would for certain not be transitive, because of the Riemann Hurwitz theorem.
\end{enumerate}
\end{oss}

\begin{lemma}
If $\pmb{g}$ belongs to case $(3)$ of Remark \ref{rmk:situations}, then it is always possible to reduce to case $(1)$ or $(2)$ by using Hurwitz actions and Proposition \ref{Prop:reductionfour}.
\end{lemma}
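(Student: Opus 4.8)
The plan is to exploit the fact that in case $(3)$ the two elements detected by Proposition~\ref{Prop:reductionfive} coincide, so the repeated element can be isolated and the remaining four entries are forced to multiply to the identity; Proposition~\ref{Prop:reductionfour} is then applied to this quadruple. First I would use the Hurwitz action to move the coincident pair to the first two positions, writing $\pmb{g}=(x,x,g_3,g_4,g_5,g_6)$ with $x\in\cc$. Since $x$ is an involution, $x\cdot x=\id$, and from $\prod_i g_i=\id$ one gets $g_3 g_4 g_5 g_6=\id$. The Hurwitz generators $\sigma_3,\sigma_4,\sigma_5$ act only on the last four entries and leave the block $(x,x)$ untouched, so every manipulation of $(g_3,g_4,g_5,g_6)$ furnished by Proposition~\ref{Prop:reductionfour} lifts verbatim to a manipulation of the full tuple.

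Applying Proposition~\ref{Prop:reductionfour} to $(g_3,g_4,g_5,g_6)$, up to the Hurwitz action on those positions and an external automorphism, either two of the four elements share their fixed points, or the quadruple equals $(\a{1234},\a{1235},\a{1634},\a{1645})$. In the first subcase, if the two elements with common fixed points are distinct, then by Proposition~\ref{Prop:FixedPointReduction} their product again lies in $\cc$, a $1$-reduction among the last four is available, and $\pmb{g}$ falls into case $(1)$ or $(2)$, as desired. The genuinely delicate situations are the complementary ones, in which the quadruple admits no internal $1$-reduction: either Proposition~\ref{Prop:reductionfour} returns two \emph{equal} elements, which (since $g_3g_4g_5g_6=\id$) forces the configuration $(x,x,y,y,z,z)$, or it returns the exceptional tuple $(\a{1234},\a{1235},\a{1634},\a{1645})$, whose four members have pairwise distinct fixed-point sets and whose product is the identity. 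A Riemann--Hurwitz obstruction does not eliminate these, because the transitivity missing from the quadruple may be supplied by $x$ rather than by the four elements themselves.

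To treat these remaining cases I would bring one of the two leading copies of $x$ into contact with the quadruple: a single move $\sigma_2$ sends $\pmb{g}$ to $(x,g_3,g_3^{-1}xg_3,g_4,g_5,g_6)$, inserting a conjugate of $x$ among the others. Because $\pmb{g}$ is transitive we have $\langle x,g_3,g_4,g_5,g_6\rangle=A_6$, so there is enough conjugation freedom (using Lemma~\ref{lemma:friedproductone} to conjugate the relevant consecutive block, and further $\sigma_i$) to realise a copy of $x$ sharing the fixed points of some $g_i$ but distinct from it, whence a $1$-reduction. The main obstacle is precisely this last step: certifying that, for the exceptional quadruple and for the all-pairs-equal configuration, mixing in a copy of $x$ always produces two \emph{distinct} elements with common fixed points. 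I expect to settle it by the same finite case-by-case verification employed in Propositions~\ref{Prop:reductionfour} and~\ref{Prop:reductionfive}, enumerating the admissible values of $x$ compatible with transitivity and checking by a short computation that a $1$-reduction is reachable in each of them, so that $\pmb{g}$ is always brought into case $(1)$ or $(2)$.
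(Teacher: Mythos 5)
Your proposal matches the paper's argument in all essentials: isolate the repeated pair (the paper puts it at the end, you put it at the front — immaterial), observe that the remaining quadruple multiplies to the identity, apply Proposition~\ref{Prop:reductionfour} to it, note that the only configurations not yielding an immediate $1$-reduction are $(x,x,y,y,z,z)$ and the exceptional quadruple padded with a repeated pair, and dispose of these finitely many residual cases by a Hurwitz-action search (the paper's Code~\ref{prog:solve6pointscase2}). Your closing computational verification is exactly the step the paper delegates to that code, so the proof is correct and takes essentially the same route.
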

\begin{proof}
Since $\pmb{g}$ belongs to case $(3)$, we can assume it to be 
$$\{g_1, g_2, g_3, g_4, g_5, g_5\}\text{.}$$
One can apply Proposition \ref{Prop:reductionfour} to the ordered set $\{g_1, g_2, g_3, g_4\}$. If a $1$-reduction is obtained, the proof is concluded. If not, $\pmb{g}$ has one of these forms
$$\{h_1, h_1, h_2, h_2, g_5, g_5\}$$
$$\{\a{1234}, \a{1235}, \a{1634}, \a{1645}, g_5, g_5 \}$$

The first form is given by three pairs of elements ${h_1, h_2, h_3}$ such that $\langle h_1, h_3, g_5 \rangle$ is the whole $A_6$. The second form is simply the third possible outcome of Proposition \ref{Prop:reductionfour}.
Code \ref{prog:solve6pointscase2} shows that every case can be reduced to $(1)$ or $(2)$. The strategy consists in listing all the possibilities for such forms, and then using the Hurwitz action until a $1$-reduction is found.

\end{proof}

The following lemma shows that it is always possible to obtain case $(1)$ in Remark \ref{rmk:situations}.

\begin{lemma}
If $\pmb{g}$ belongs to case $(2)$ of Remark \ref{rmk:situations}, then it is always possible to reduce to case $(1)$ by using Hurwitz actions and Proposition \ref{Prop:reductionfour}.
\end{lemma}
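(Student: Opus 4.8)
The plan is to follow the template of the previous lemma, using Proposition~\ref{Prop:reductionfour} to move the reducing multiplication to a pair that does not destroy transitivity. First I would normalize $\pmb{g}$. Being in case~$(2)$ means that, after a Hurwitz action, the pair produced by Proposition~\ref{Prop:reductionfive} among the first five elements may be taken to be $g_1,g_2$, with the same fixed points and $g_1\neq g_2$, so that $x:=g_1g_2\in\cc$ by Proposition~\ref{Prop:FixedPointReduction}. Up to an external automorphism of even parity one may further assume $g_1=\a{1234}$ and $g_2=\a{1324}$, so that $\langle g_1,g_2\rangle$ is the Klein four group $\{\id,\a{1234},\a{1324},\a{1423}\}$ and $x=\a{1423}$. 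The defining feature of case~$(2)$ is that $H:=\langle x,g_3,g_4,g_5,g_6\rangle$ is a proper subgroup of $A_6$, whereas $\langle g_1,g_2,g_3,g_4,g_5,g_6\rangle=A_6$; since $g_2=g_1x$, this forces $\langle H,g_1\rangle=A_6$. Thus the loss of transitivity is caused precisely by collapsing the Klein four group $\langle g_1,g_2\rangle$ onto the cyclic group $\langle x\rangle$.

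The guiding idea is therefore to keep the Klein four group intact and perform the reducing multiplication elsewhere. Concretely, I would apply Proposition~\ref{Prop:reductionfour} to the four elements $\{g_3,g_4,g_5,g_6\}$: up to the Hurwitz action this returns either a pair among them with common fixed points, or the exceptional configuration $(\a{1234},\a{1235},\a{1634},\a{1645})$. If the pair admits a $1$-reduction, carrying it out leaves both $g_1$ and $g_2$ untouched, so the reduced five-element sequence still contains $\langle g_1,g_2\rangle=V_4\ni x$. Retaining the full Klein four group addresses the original generation defect, which came exactly from collapsing it; what must then be checked is that merging the chosen pair of the $g_i$ does not introduce a new defect, i.e. that the reduced sequence still generates $A_6$ and we are back in case~$(1)$.

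The remaining and main work is to show that a transitivity-preserving $1$-reduction genuinely always exists, that is, to rule out configurations in which every admissible reduction among $\{g_3,g_4,g_5,g_6\}$ is forced either to be a $2$-reduction or to drop out of $A_6$. Reductions automatically respect the product-one condition and the class $\cc$ (Propositions~\ref{Prop:FixedPointReduction} and~\ref{prop:liftingandreduction}), so the only delicate point is generation, which is fragile; however the proper subgroups of $A_6$ that can occur as $H$ are few, so this is a finite verification. The exceptional form returned by Proposition~\ref{Prop:reductionfour}, together with any subcase admitting only $2$-reductions, must then be disposed of by running the Hurwitz action explicitly until a transitive $1$-reduction appears, exactly as in the computation used for case~$(3)$. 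The hard part is precisely this transitivity bookkeeping: proving that the generation defect localized in the collapsed Klein four group can always be repaired by a suitable choice of reducing pair, and that no residual configuration resists every such attempt.
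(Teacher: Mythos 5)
Your proposal is correct and follows essentially the same route as the paper: the paper's entire proof is a finite case-by-case verification (Code \ref{prog:list6pointscase1}) that enumerates the six-tuples with first pair $\a{1234},\a{1324}$, product one and full monodromy whose naive $1$-reduction loses transitivity, and checks that the Hurwitz action always yields a transitivity-preserving $1$-reduction. Your structural preprocessing (the Klein four group observation and the appeal to Proposition \ref{Prop:reductionfour} on $g_3,\ldots,g_6$) is a sensible way to organize that search, but as you yourself note the argument still bottoms out in the same explicit Hurwitz-action case check that the paper delegates to the computer.
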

\begin{proof}
This is done with a case by case analysis carried out with Code \ref{prog:list6pointscase1}.
\end{proof}

Then one has only to deal with case $(1)$ of Remark \ref{rmk:situations}.
Proposition \ref{Prop:FivePoints} shows that three case can arise, $(g_1 g_2, g_3, \ldots, g_6)$ belonging to $\H(A_6,\cc^5)^{abs}$, $\H([G_{60}],\cc^5)^{abs}$ or $\H([G_{24}],\cc^5)^{abs}$, respectively.
Let define $\H_-(A_6,\cc^6)^{abs}$ as the space of $(g_1 g_2, g_3, \ldots, g_6)$ that can be reduced to an element in $\H(A_6,\cc^5)^{abs}$ and $\H_+(A_6,\cc^6)^{abs}$ as the space of $(g_1 g_2, g_3, \ldots, g_6)$ that can be reduced to an element in $\H(G_{60},\cc^5)^{abs}$. It remains to study what happens if the element reduces to $\H(G_{24},\cc^5)^{abs}$.

\begin{lemma}
Let $\pmb{g}$ be an element in $Ni(A_6,\cc^5)$ of this form
$$(\a{1234}, \a{1324}, g_3, g_4, g_5, g_6)$$
and assume that the reduction
$$\bar{\pmb{g}} := (\a{1423}, g_3, g_4, g_5, g_6)$$
is a valid monodromy. Then $\bar{\pmb{g}}$ can not have order 24.
\end{lemma}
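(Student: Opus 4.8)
The plan is to derive a contradiction from the assumption that $\langle\bar{\pmb g}\rangle$ has order $24$, using only the structure of a transitive copy of $S_4$ inside $A_6$; the lifting invariant plays no role here. First I would record the reduction identity $\a{1234}\cdot\a{1324}=\a{1423}$, so that $\langle\pmb g\rangle=\langle\a{1234},\a{1324},\a{1423},g_3,\dots,g_6\rangle=\langle V,\langle\bar{\pmb g}\rangle\rangle$, where $V=\{\id,\a{1234},\a{1324},\a{1423}\}$ is the Klein four-group of even permutations of $\{1,2,3,4\}$ that fix $5$ and $6$. Since $\pmb g$ lies in a Nielsen class for $A_6$ we have $\langle\pmb g\rangle=A_6$, so it suffices to prove that the assumption $\langle\bar{\pmb g}\rangle=G_{24}$ forces $V\subseteq G_{24}$: indeed then $\a{1234},\a{1324}\in G_{24}$ and $g_3,\dots,g_6\in\langle\bar{\pmb g}\rangle=G_{24}$, whence $\langle\pmb g\rangle\le G_{24}$, contradicting $|A_6|=360>24$.

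The heart of the argument is to pin down $G_{24}$ concretely. By Proposition \ref{Prop:FivePoints} it is the unique, up to conjugacy, transitive copy of $S_4$ in $A_6$, and I would exploit its normal Klein four-subgroup $N\trianglelefteq G_{24}$. Because $N$ is normal and $G_{24}$ acts transitively on $\{1,\dots,6\}$, the group $G_{24}$ permutes the $N$-orbits transitively, so these orbits all have the same size; that size divides $|N|=4$ and the sizes sum to $6$, leaving only the possibility of three orbits of size $2$ (size $1$ would force $N=\id$ by faithfulness, and $4\nmid 6$). Thus $N$ produces a $G_{24}$-invariant partition $P$ of $\{1,\dots,6\}$ into three pairs, so $G_{24}\le\operatorname{Stab}_{S_6}(P)\cap A_6$. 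Since $\operatorname{Stab}_{S_6}(P)\cong C_2\wr S_3$ has order $48$ and meets $A_6$ in an index-two subgroup of order $24$, the inclusion is forced to be an equality $G_{24}=\operatorname{Stab}_{S_6}(P)\cap A_6$.

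With this description the conclusion is short. The element $\a{1423}=(1,4)(2,3)$ lies in $G_{24}$, hence preserves $P$; as it fixes $5$ and $6$, the block of $P$ through $5$ is carried to itself, and its second point must be a fixed point of $\a{1423}$, forcing that block to be $\{5,6\}$. The remaining two blocks therefore partition $\{1,2,3,4\}$ as a $2+2$ split. Now $V$ is exactly the normal four-group of $\operatorname{Sym}\{1,2,3,4\}$, which is contained in the stabiliser of every $2+2$ partition of $\{1,2,3,4\}$; each of its elements also fixes $\{5,6\}$ and is even, so $V\subseteq\operatorname{Stab}_{S_6}(P)\cap A_6=G_{24}$. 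This gives $\langle\pmb g\rangle\le G_{24}$ and the desired contradiction, so $\langle\bar{\pmb g}\rangle$ cannot have order $24$.

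I expect the only delicate point to be the structural identification $G_{24}=\operatorname{Stab}_{S_6}(P)\cap A_6$: one has to argue that the orbits of the normal four-group have size exactly $2$ and then use the order count ($24=24$) to upgrade the inclusion to an equality. Everything after that is a brief combinatorial check of how $\a{1423}$ and the rest of $V$ sit inside the three-pairing $P$, and the contradiction with $\langle\pmb g\rangle=A_6$ is immediate.
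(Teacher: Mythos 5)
Your proof is correct, and it takes a genuinely different route from the paper's. The paper argues through its classification of five-point monodromies: by Proposition \ref{prop:finalformelements5} an order-$24$ reduction $\bar{\pmb{g}}$ is equivalent, under the Hurwitz action and external automorphisms, to the single normal form of Case $[2]$; the equivalence is lifted to $\pmb{g}$ by splitting the reduced entry back into the pair $\a{1324}$, $\a{1423}$; and one then observes that the resulting six-tuple still generates only a group of order $24$, contradicting $\langle\pmb{g}\rangle=A_6$. (As printed, the paper's proof displays the Case $[3.1]$ representative, which generates all of $A_6$; this is evidently a slip for the Case $[2]$ representative, since only then does the final order count produce the stated contradiction.) You bypass the classification entirely: you identify any transitive copy of $S_4$ in $A_6$ as the even part of the stabilizer of the block system $P$ formed by the orbits of its normal Klein four-subgroup, use the fact that $\a{1423}\in G_{24}$ fixes $5$ and $6$ to force $\{5,6\}$ to be a block, and conclude that the Klein group $V=\{\id,\a{1234},\a{1324},\a{1423}\}$ preserves $P$ and hence lies in $G_{24}$, so that $\langle\pmb{g}\rangle=\langle V,\langle\bar{\pmb{g}}\rangle\rangle=G_{24}$, the same contradiction. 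Each step of your structural argument checks out: the orbits of a normal subgroup under a transitive group all have equal size dividing $|N|=4$ and summing to $6$, hence size $2$; the order count $|\operatorname{Stab}_{S_6}(P)\cap A_6|=24$ upgrades the inclusion $G_{24}\le\operatorname{Stab}_{S_6}(P)\cap A_6$ to an equality; and the fixed-point argument correctly locates the block $\{5,6\}$. What the paper's route buys is brevity, given that the computer-assisted normal forms are already established; what yours buys is a self-contained, conceptual proof that works uniformly for every conjugate of $G_{24}$ and is independent of the MAGMA-verified case analysis.
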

\begin{proof}
By Proposition \ref{prop:finalformelements5}, the element $\bar{\pmb{g}}$ is equivalent to 
$$(\a{1234}, \a{1324}, \a{1425}, \a{1623}, \a{1635})$$
then the element $\pmb{g}$ equivalent to one of the form
$$(\a{1423}, \a{1324}, \a{1324}, \a{1425}, \a{1623}, \a{1635})$$
with the $1$-reduction taking place in the first element. But the order of this element is not $360$ and this give a contradiction.
\end{proof}

Now it is possible to conclude that $\H(A_6,\cc^6)^{abs}$ has exactly two connected components, proving Proposition \ref{Prop:FivePoints}.

\begin{proof}[Proof of Proposition \ref{Prop:FivePoints}]
From the previous lemmas and propositions, one know that every element of $\H(A_6,\cc^6)^{abs}$ falls either in $\H_+(A_6,\cc^6)^{abs}$, the space of the elements that admits a reduction to an element of $\H([G_60],\cc^5)^{abs}$, or in $\H_-(A_6,\cc^6)^{abs}$, the space of the elements that admits a reduction to an element of $\H(A_6,\cc^5)^{abs}$.
These two spaces are well defined because, as proved in Proposition \ref{prop:liftingandreduction}, the lifting invariant does not change via this kind of reductions and the lifting invariant of $\H(A_6,\cc^5)^{abs}$ and $\H([G_60],\cc^5)^{abs}$ are different.
Eventually, the connectedness of $\H(A_6,\cc^5)^{abs}$ and $\H([G_60],\cc^5)^{abs}$, and Proposition \ref{prop:liftinghurwitz} ensures that $\H_+(A_6,\cc^6)^{abs}$ and $\H_-(A_6,\cc^6)^{abs}$ are also connected.
\end{proof}

\section{The space $\H(A_6,\cc^r)^{in}$}  \label{sec_inner}
If one considers two elements in $\H(A_6,\cc^r)^{in}$, the right action of the external automorphism can be performed only with elements of $A_6$. Proposition \ref{prop:InnerAbsolute} shows that in this setting, two elements with two different lifting invariants of order three are no longer equivalent. The number of different connected components should then increase. Fix a lifting of the identity of order three $\sigma$.
Notice that the expected result is different from the result of \cite{FriedAlternating}, due to the fact that, in that case, the lifting invariant is an order two lifting of the identity.

The following is the proof of the final theorem in the case of inner moduli space.

\begin{proof}[Proof of Theorem \ref{teor:finalresultinner}]
In order to exploit Theorem \ref{teor:finalresult}, notice that an element in $\H_{\conn{1}}(A_6,\cc^k)^{in}$ becomes an element in $\H_{\conn{2}}(A_6,\cc^k)^{in}$ after the right action of a single $2$-cycle. Proposition \ref{prop:InnerAbsolute} shows that there are at least three connected components, that will be denoted by of $\H_{\conn{0}}(A_6,\cc^k)^{in}$, $\H_{\conn{1}}(A_6,\cc^k)^{in}$ and $\H_{\conn{2}}(A_6,\cc^k)^{in}$, depending on the lifting invariant being $\id_V$, $\sigma$ and $\sigma^2$, respectively. It remains to prove that there are no more components. That is straightforward for $\H_{\conn{1}}(A_6,\cc^k)^{in}$ and $\H_{\conn{2}}(A_6,\cc^k)^{in}$, so let us check only the case $\H_{\conn{0}}(A_6,\cc^6)^{in}$.
Consider first the following element in $\H_{\conn{0}}(A_6,\cc^6)^{in}$:
$$\pmb{g} := \a{1234},\a{1324}, \a{1236},\a{1245},\a{1526},\a{4536}.$$
Applying the odd external automorphism $\phi_{\a{1234}\b{56}}$, we obtain the following element
$$\pmb{g}' := \a{1234},\a{1324}, \a{1245},\a{1236},\a{1526},\a{4536}$$
Then by the Hurwitz action we can exchange the third and the fourth elements, going back again to the first element. Hence these two elements in $\H_{\conn{0}}(A_6,\cc^6)^{abs}$ differ from an external automorphism of odd parity and are still related by the Hurwitz action.
Let now $s$ be another element of $S_6$, and consider $\pmb{h} := s^{-1} \pmb{g} s$. If $s$ has even parity, Proposition \ref{prop:InnerAbsolute} show that $\pmb{h} $ still belongs to $\H_{\conn{0}}(A_6,\cc^6)^{in}$, if $s$ has odd parity, then $\pmb{h}$ will differ from $\pmb{g}'$ by the action of $s \circ \phi_{\a{1234}\b{56}}$, that has even parity. Also in this case, then, $\pmb{h}$ still belongs to $\H_{\conn{0}}(A_6,\cc^6)^{in}$.

The same reasoning can be applied to the case of $\H_{\conn{1}}(A_6,\cc^6)^{in}$ and $\H_{\conn{2}}(A_6,\cc^6)^{in}$, and also in the case of $5$ points. Consider for example the following element of $\H_{\conn{1}}(A_6,\cc^6)^{in}$
$$(\a{1234},\a{1234}, \a{1236},\a{1256},\a{1435},\a{1456})$$
Applying the external automorphism $\phi_{\a{1234}\b{56}}$, that is of odd parity, one obtains the following element
$$(\a{1234},\a{1234}, \a{1245},\a{1256},\a{2346},\a{2356})\text{.}$$
that belongs to $\H_{\conn{2}}(A_6,\cc^6)^{in}$. The final part follows exactly as in the case of $\H_{\conn{0}}(A_6,\cc^6)^{in}$. 

Finally, for the induction part to be true it is sufficient to show that the computations of Section \ref{sec_absolute} can be performed with external automorphisms of even parity. These automorphisms are used in Proposition \ref{Prop:FixedPointReduction}, in which an even automorphism is used and Proposition \ref{Prop:reductionfour}, that is used only to show that a particular case does not arise for $k=7$ in the proof of Theorem \ref{teor:finalresult}, but the result is still valid even if one compose with a cycle. Then the proof follows.
\end{proof}

\section{Open problems} \label{sec:open}

The problem of studying the Hurwitz spaces is still widely open. The same technique used in this paper can be in principle used to study all the other cases described asymptotically in \cite{BK}. 
\begin{que}
Complete the study of the lower genus cases described in \cite{BK}, Theorem 4.14, Theorem 4.15 and Proposition 4.16. Find a generalization of the lifting invariant suitable for all the possible conjugacy classes of $A_n$, and use it to classify spaces of mixed monodromy type.
\end{que}
A possible strategy to attach the case of mixed monodromy type is deforming the base making different branch point collide. That in principle could allow us to start from a space with homogeneous monodromy type and then deform it to one with mixed monodromy type. The difficult part would be describing the degenerate situation, in which the monodromy group becomes not transitive. 
\begin{que}
Study from the algebraic point of view what happened if the transitivity hypothesis is dropped in the definition of the Nielsen classes.
\end{que}
Eventually, even by knowing the connected component of the Hurwitz spaces, it is very difficult to explicitly give functions with an assigned type of monodromy. An intriguing question is then the following
\begin{que}
For each component provided by Theorem \ref{teor:finalresult} and Theorem \ref{teor:finalresultinner}, find an explicit example of a rational function defined over $\P^1$ that gives rise to a monodromy belonging to such a component.
\end{que}

\medskip

{\small\noindent{\bf Acknowledgements.}
The first named author was supported by the Department of Mathematics and Natural Sciences of University of Stavanger in the framework of the grant 230986 of the Research Council of Norway. The second named author is partially supported by INdAM (GNSAGA); PRIN 2012 \emph{``Moduli, strutture geometriche e loro applicazioni''} and FAR 2014 (PV) \emph{``Variet\`a algebriche, calcolo algebrico, grafi orientati e topologici''.}
The authors are especially grateful to Professor Michael Fried and Alice Cuzzucoli for their valuable comments on a preliminary version of this paper and to Fedor A. Bogomolov for drawing the work \cite{BK} to our attention. 
}

\appendix
\section{Magma source code} \label{Apx:code}
Most of the codes are just shortcuts in order to list all the elements with a certain property and use the Hurwitz action on them to check if a certain property holds. They are mainly used to work out the basic cases of genus zero and one, and some preliminaries of the induction step that strongly depends on the fact that we are restricting ourselves to work in the conjugacy class $\cc$. 

\subsection{Preamble to the other codes} \label{prog:preamble}
This code is the preamble to all the other codes. 
\begin{codice_magma}
S:= PermutationGroup<6 |  (1, 2)(3,4),(1,2,4,5)(3,6)>;

// The ordered list of the elements in the conjugacy class of the product of two disjoint cycles in A_6.
El:=[S!(1,2)(3,4),S!(1,2)(3,5),S!(1,2)(3,6),S!(1,2)(4,5),S!(1,2)(4,6),S!(1,2)(5,6),S!(1,3)(2,4),S!(1,3)(2,5),
     S!(1,3)(2,6),S!(1,3)(4,5),S!(1,3)(4,6),S!(1,3)(5,6),S!(1,4)(2,3),S!(1,4)(2,5),S!(1,4)(2,6),S!(1,4)(3,5),
     S!(1,4)(3,6),S!(1,4)(5,6),S!(1,5)(2,3),S!(1,5)(2,4),S!(1,5)(2,6),S!(1,5)(3,4),S!(1,5)(3,6),S!(1,5)(4,6),
     S!(1,6)(2,3),S!(1,6)(2,4),S!(1,6)(2,5),S!(1,6)(3,4),S!(1,6)(3,5),S!(1,6)(4,5),S!(2,3)(4,5),S!(2,3)(4,6),
     S!(2,3)(5,6),S!(2,4)(3,5),S!(2,4)(3,6),S!(2,4)(5,6),S!(2,5)(3,4),S!(2,5)(3,6),S!(2,5)(4,6),S!(2,6)(3,4),
     S!(2,6)(3,5),S!(2,6)(4,5),S!(3,4)(5,6),S!(3,5)(4,6),S!(3,6)(4,5)];

// The following function uses the Hurwitz action on a Nielsen class G and check by using the function
// 'FunctionCheck' that a certain property holds. FunctionCheck takes as input a Nielsen class
// and returns true or false depending on if that property holds.
// N is the number of times the generators of the Braid group are applied. To speed up the computation
// often it is just needed to apply them one or two times
HurwitzAction := function(G,N,FunctionCheck)
    if (FunctionCheck(G)) then return true; end if;
    Result:={G};
    for P:=1 to N do
    for I in Result do
        for J:= 1 to #I-1 do
            NewElement:=I;
            A:=NewElement[J];
            B:=NewElement[J+1];
            NewElement[J]:=A*B*Inverse(A);
            NewElement[J+1]:=A;
            if (FunctionCheck(NewElement)) then return true; end if;
            Result := Result join {NewElement};
        end for;
    end for;
    end for;
    return false;
end function;
\end{codice_magma}
\vspace{-2mm}

\subsection{Lifting invariant} \label{prog:liftinginvariant}\hfill \break
This code computes the lifting invariant of an element in $\Ni(A_6,\cc^k)$.
\begin{codice_magma}
// The valentiner group H
H:=PermutationGroup<18|(2, 6)(4, 11)(7, 9)(8, 13)(10, 14)(12, 16),
   (1, 2, 7, 4)(3, 8, 6, 10)(5, 9, 13, 12)(11, 15)(14, 17)(16, 18)>;
LiftingHom:=hom< H -> S | H.1 -> S.1, H.2 -> S.2 >;
ElKer:=H!(1, 3, 5)(2, 8, 9)(4, 10, 12)(6, 13, 7)(11, 14, 16)(15, 17, 18);

LiftingInvariant := function(G)
    ProductH:=Identity(H);
    for El in G do
        Lift:=El@@LiftingHom;
        while (Order(Lift) ne Order(El)) do
            Lift := Lift * ElKer;
        end while;
        ProductH:=ProductH*Lift;
    end for;
    return ProductH;
end function;
\end{codice_magma}
\vspace{-2mm}

\subsection{Finding minimal subsets of generators I} \label{prog:from5to4} \hfill \break
This code allows to prove that every set of $5$ elements in $\cc$ that generates all $A_6$ admits a subset of $4$ elements that are still generators. 
\begin{codice_magma}  
// This function checks if one can remove an element from the list G 
// and still get something of maximal order
ReduceByOne := function(G)
    for R in G do
	NewElements:=Exclude(G,R);
	if (Order(sub<S|NewElements>) eq 360) then
		return true;
	end if;
    end for;
    return false;
end function;

CountNonReducibles:=0;
for I in [2 .. 45], J in [I+1 .. 45], K in [J+1 .. 45], L in [K+1 .. 45] do
	Candidate:=[S!(1,2)(3,4),El[I],El[J],El[K],El[L]];
	if (Order(sub<S|Candidate>) eq 360 and not ReduceByOne(Candidate)) then
            CountNonReducibles:=CountNonReducibles+1;
	end if;
end for;
print "Number of non reducible elements: ",CountNonReducibles;
\end{codice_magma}
\vspace{-2mm}

\subsection{Finding minimal subsets of generators II} \label{prog:final4to3} \hfill \break
This code allows to prove that in every set of $4$ elements in $\cc$ that generates all $A_6$ one can find $3$ elements that are still generators up to the Hurwitz action on the set.

\begin{codice_magma} 
// This function checks if one can remove an element from the list G 
// and still get something of maximal order
ReduceByOne := function(G)
    for R in G do
	NewElements:=Exclude(G,R);
	if (Order(sub<S|NewElements>) eq 360) then
		return true;
	end if;
    end for;
    return false;
end function;

CountNonReducibles:=0;
for I in [2 .. 45], J in [I+1 .. 45], K in [J+1 .. 45] do
	Candidate:=[S!(1,2)(3,4),El[I],El[J],El[K]];
	if (Order(sub<S|Candidate>) eq 360 and not ReduceByOne(Candidate)) then
            if (not HurwitzAction(Candidate,2,ReduceByOne)) then
                CountNonReducibles:=CountNonReducibles+1;
            end if;
	end if;
end for;
print "Number of non reducible elements: ",CountNonReducibles;
\end{codice_magma}
\vspace{-2mm}

\subsection{Finding reductions I} \label{prog:find2fixedpoints} \hfill \break
This codes try to list the sets of $4$ elements of $\cc$ for which it is not possible to find any pair of elements with the same fixed points, even if the Hurwitz action is used.
\begin{codice_magma} 
// This function checks if among the elements of the list G
// there are two that have the same fixed points
IsReducible := function(G)
    for R in [1..(#G-1)], K in [R+1..#G] do
        if (#(Fix(G[R]) meet Fix(G[K])) eq 2) then
            return true;
        end if;
    end for;
    return false;
end function;

for I in [2 .. 45], J in [I+1 .. 45], K in [J+1 .. 45] do
    	Candidate:=[S!(1,2)(3,4),El[I],El[J],El[K]];
        if (not HurwitzAction(Candidate,10,IsReducible)) then
            printf "
        end if;
end for;
\end{codice_magma}
\vspace{-2mm}

\subsection{Finding reductions II} \label{prog:find2fixedpoints5} \hfill \break
This code allows to prove that in every set of $5$ elements in $\cc$ it is possible to find two elements with the same fixed points up to the Hurwitz action.

\begin{codice_magma}
// This function checks if among the elements of the list G
// there are two that have the same fixed points
IsReducible := function(G)
    for R in [1..(#G-1)], K in [R+1..#G] do
        if (#(Fix(G[R]) meet Fix(G[K])) eq 2) then
            return true;
        end if;
    end for;
    return false;
end function;

CountNonReducibles:=0;
for I in [2 .. 45], J in [I+1 .. 45], K in [J+1 .. 45], N in [K+1 .. 45] do
    	Candidate:=[S!(1,2)(3,4),El[I],El[J],El[K],El[N]];
        if (not HurwitzAction(Candidate,10,IsReducible)) then
            CountNonReducibles:=CountNonReducibles+1;
        end if;
end for;
print "Number of non reducible elements: ",CountNonReducibles;
\end{codice_magma}
\vspace{-2mm}

\subsection{Special case of five points} \label{prog:list5points} \hfill \break
This code provides the list of elements in $Ni(G,\cc^5)$ requested by Lemma \ref{Lemma:List}.

\begin{codice_magma}
// All the possible cases for the third element given by the lemma
Base3:=[S!(1,4)(2,5),S!(1,4)(5,6),S!(1,5)(2,3),S!(1,5)(2,4),S!(1,5)(2,6),
                       S!(1,5)(3,4),S!(1,5)(3,6),S!(1,5)(4,6)];

for I in [1 .. 8], J in [1 .. 45], K in [J .. 45] do
    Candidate:=[S!(1,2)(3,4),S!(1,3)(2,4),Base3[I],El[J],El[K]];
    if (&*Candidate eq Identity(S) and IsTransitive(sub<S|Candidate>)) then
        printf "
    end if;
end for;
\end{codice_magma}
\vspace{-2mm}

\subsection{Special case of six points I} \label{prog:solve6pointscase2} \hfill \break
This code lists all the possible cases of an element in $\Ni(A_6,\cc^6)$ having only $2$-reductions and use the Hurwitz action to prove that actually these elements also admit a $1$-reduction.
\begin{codice_magma}
// This function checks if among the elements of the list G 
// there are two that gives a 1-reduction
IsOneReducible:= function(G)
for R in [1..(#G-1)], K in [R+1..#G] do
    if ((Order(sub<S|[G[R],G[K]]>) eq 4)  and (#(Fix(G[R]) meet Fix(G[K])) eq 2)) then
        return true;
    end if;
end for;
    return false;
end function;

CountNonReducibles:=0;
for I in [2 .. 30], J in [I .. 45] do
    Candidate:=[S!(1,2)(3,4),S!(1,2)(3,4),El[I],El[I],El[J],El[J]];
    if (Order(sub<S|Candidate>) eq 360 and not HurwitzAction(Candidate,5,IsOneReducible)) then
        CountNonReducibles:=CountNonReducibles+1;
    end if;
 end for;

for I:= 1 to 45 do
    Candidate:=[S!(1,2)(3,4),S!(1,2)(3,5),S!(1,6)(3,4),S!(1,6)(4,5),El[I],El[I]];
    if (Order(sub<S|Candidate>) eq 360 and not HurwitzAction(Candidate,5,IsOneReducible)) then
        CountNonReducibles:=CountNonReducibles+1;
    end if;
end for;

print "Number of non reducible elements: ",CountNonReducibles;
\end{codice_magma}
\vspace{-2mm}

\subsection{Special case of six points II} \label{prog:list6pointscase1} \hfill \break
This code lists all the possible cases of an element in $\Ni(A_6,\cc^6)$ that has a $1$-reduction but does not give an element in $\Ni(A_6,\cc^5)$ because the transitivity is lost.
\begin{codice_magma}
// This function checks if by multiplying the first two elements of the list G 
// one still get something that generates a transitive subgroup
IsReducible:= function(G)
    G2:=[G[1]*G[2],G[3],G[4],G[5],G[6]];
    if IsTransitive(sub<S|G2>) then return true; end if;
    return false;
end function;

CountNonReducibles:=0;
for I in [1 .. 45], J in [I .. 45], K in [J .. 45], N in [K .. 45] do
    Candidate:=[S!(1,2)(3,4),S!(1,3)(2,4),El[I],El[J],El[K],El[N]];
    Candidate2:=[S!(1,2)(3,4)*S!(1,3)(2,4),El[I],El[J],El[K],El[N]];
    Product:=&*Candidate;
    G:=sub<S|Candidate>;
    G2:=sub<S|Candidate2>;
    if (Product eq Identity(S) and Order(G) eq 360 and not IsTransitive(G2)
    							and not HurwitzAction(Candidate,1,IsReducible)) then
        CountNonReducibles:=CountNonReducibles+1;
    end if;
end for;

print "Number of non reducible elements: ",CountNonReducibles;
\end{codice_magma}

\end{document}